\documentclass[11pt,a4paper,reqno]{amsart}
\usepackage[margin=30mm]{geometry}
\usepackage[T1]{fontenc}
\usepackage{lmodern}
\usepackage{amsmath,amssymb}
\usepackage{microtype}
\usepackage{needspace}
\usepackage[hidelinks]{hyperref}

\newtheorem{theorem}{Theorem}[section]
\newtheorem{proposition}[theorem]{Proposition}
\newtheorem{lemma}[theorem]{Lemma}
\newtheorem{corollary}[theorem]{Corollary}
\theoremstyle{definition}
\newtheorem{definition}[theorem]{Definition}
\newtheorem{example}[theorem]{Example}
\theoremstyle{remark}
\newtheorem{remark}[theorem]{Remark}
\numberwithin{equation}{section}
\AddToHook{env/theorem/before}{%
  \ifnum\value{section}=1\relax\Needspace{12\baselineskip}\fi}

\newcommand{\Z}{\mathbb Z}
\newcommand{\R}{\mathbb R}
\newcommand{\steps}{\mathcal S}
\newcommand{\ind}{\iota}
\DeclareMathOperator{\dom}{dom}
\DeclareMathOperator{\conv}{conv}

\newcommand{\ip}[2]{\langle #1,#2\rangle}
\newcommand{\normone}[1]{\lVert #1\rVert_1}
\newcommand{\normtwo}[1]{\lVert #1\rVert_2}

\title[Jensen's inequality on parity classes]{Jensen's inequality on parity classes of jump \texorpdfstring{$M$}{M}-convex functions}
\author[X. Liu]{Xinyue Liu}
\address{Tsinghua University, Beijing, China}
\email{liuxinyue743@gmail.com}

\keywords{Jump system, discrete convexity, Jensen's inequality, affine support, infimal convolution}
\subjclass[2020]{Primary 52A41; Secondary 90C10, 90C27}
\date{10 September 2026}
\hypersetup{pdfauthor={Xinyue Liu},
  pdftitle={Jensen's inequality on parity classes of jump M-convex functions},
  pdfsubject={Preprint}}

\begin{document}
\begin{abstract}
We prove Jensen's inequality on each coordinatewise parity class of a
jump $M$-convex function on $\Z^n$, without assuming strong base orderability or a
finite effective domain. If a finite convex combination of domain points
has its barycenter in the same parity class, then the barycenter belongs
to the domain and satisfies Jensen's inequality. We also prove that every
domain point admits an affine support on its entire parity class. The set
of support vectors is characterized by at most $3^n-1$ local inequalities,
which yield optimality certificates, attained duality, and an exact
formula for linear growth and stability under linear perturbations.
The same conclusions hold for distributions with finite first moment
supported on domain points of a single parity class, provided their mean
lies in that class. We characterize equality by contact sets
and give a quantitative bound on the Jensen gap. The proof combines
induction on dimension with the known closure under infimal convolution:
convolution with an even-box indicator gives finite-domain supports, a quadratic
kernel proves barycenter feasibility, and a fixed local kernel yields
global supports. A two-dimensional example shows that total parity alone
does not suffice.

\end{abstract}
\maketitle

\section{Introduction}\label{sec:intro}

Jump $M$-convexity extends discrete exchange convexity to functions on
constant-parity jump systems. Introduced by Murota \cite{Murota2006}, this
class includes functions arising from minimum-weight perfect
$b$-matchings and separable convex costs on graph degree sequences. Its defining
exchange inequality compares two input points with two output points
obtained by complementary two-step moves. The class is closed under a
number of natural operations, including infimal convolution
\cite{KMT2007}.

The exchange inequality does not imply ordinary Jensen convexity at every
integer barycenter. The obstruction persists even if the inputs and the
barycenter have the same total coordinate parity and all belong to the
effective domain; see Example~\ref{ex:parity}. It is therefore natural to
ask on which lattice cosets Jensen's inequality holds.

Lassota and Ligthart \cite[Lemma~18]{LL2025} proved such an inequality for
strongly base-orderable (SBO) jump $M$-convex functions after fixing the
parity of each coordinate. They used the resulting convexity property
for matching value functions in a polyhedral formulation of integer
programs with additional variables. In their concluding discussion
\cite[Section~5]{LL2025}, they proposed extending the inequality to
general jump $M$-convex functions. The present paper proves this extension.

Throughout, $n$ is a positive integer. A function on a subset of $\Z^n$
is extended by $+\infty$ outside its domain. Convex combinations have
finite support; distributions of countably infinite support are treated
separately in Section~\ref{sec:probability}. Empty minima and infima have value $+\infty$, and
$\conv(\varnothing)=\varnothing$. For $r\in\{0,1\}^n$, put
\[
L_r=r+2\Z^n.
\]
We use jump $M$-convexity in the simultaneous two-output sense recalled in
Definition~\ref{def:jumpm}.

\begin{theorem}[Jensen inequality on a parity class]\label{thm:main}
Let $f:\Z^n\to\R\cup\{+\infty\}$ be a jump $M$-convex function with
nonempty effective domain $J$. Fix $r\in\{0,1\}^n$ and an integer $m\ge1$.
Let $x^1,\ldots,x^m\in J\cap L_r$, and let
$\lambda_1,\ldots,\lambda_m$ be nonnegative real numbers satisfying
\[
\sum_{j=1}^m\lambda_j=1,
\qquad
z=\sum_{j=1}^m\lambda_jx^j\in L_r.
\]
Then
\begin{equation}\label{eq:main}
z\in J,
\qquad
f(z)\le\sum_{j=1}^m\lambda_jf(x^j).
\end{equation}
\end{theorem}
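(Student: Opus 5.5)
We will follow the strategy announced in the abstract: first show that every domain point admits an affine support on its parity class, and then read off Jensen's inequality. Concretely, the target intermediate statement is the following. For every $y\in J\cap L_r$ there is $p\in\R^n$ with
\[
f(x)\ge f(y)+\ip{p}{x-y}\qquad\text{for all }x\in L_r .
\]
Granting barycenter feasibility $z\in J$, we apply this at $y=z$, average over the $x^j$ with weights $\lambda_j$, and use $\sum_j\lambda_j(x^j-z)=0$. This gives $f(z)\le\sum_j\lambda_jf(x^j)$. So the proof splits into two parts: (a) $z\in J$, and (b) existence of parity-class supports. Both will be obtained by induction on $n$, using closure of jump $M$-convexity under infimal convolution \cite{KMT2007}.

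For (b), we first reduce to finite domains. Let $B_R=\{u\in 2\Z^n:\normone{u}\le 2R\}$ be a large even box, and consider $f\,\square\,\ind_{B_R}$, or rather the restriction of $f$ to a large box around $y$; the paper's ``even-box indicator'' suggests the former. This function is jump $M$-convex, and we need it to agree with $f$ near $y$ in the right sense. For a finite-domain jump $M$-convex function, we prove the support property on a parity class by induction on dimension. We fix the last coordinate and use the exchange axiom between $y$ and $x$ along pairs of steps, ensuring that the two-step moves preserve the coordinatewise parity class. This is the point where fixing all parities, and not just the total parity, matters: a pair of steps $\pm e_i\pm e_k$ then leaves $L_r$ unless $i=k$. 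The aim is a local-to-global statement. If $y$ minimizes $f-\ip{p}{\cdot}$ among the $3^n-1$ neighbours $y+d$, where $d\in\{-2,0,2\}^n$ or the two-step analogues within $L_r$, then it minimizes globally on $L_r$. Existence of such a $p$ then follows from LP duality, i.e.\ Farkas' lemma applied to finitely many local inequalities. To get feasibility we need the local differences to be consistent, namely that they contain no negative cycle. We will derive this from the exchange inequality.

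For (a), we plan to use the quadratic kernel. Set $g=f\,\square\,q$, where $q(u)=\normtwo{u}^2$ restricted to $2\Z^n$, or a separable convex kernel supported on even vectors so that closure applies. Then $g$ is finite on all of $L_r\cap(J+2\Z^n)$, and its effective domain is the whole class. Applying (b) to $g$ at $z$ gives $g(z)\le\sum\lambda_jg(x^j)\le\sum\lambda_jf(x^j)<\infty$. Scaling the kernel by $t\to\infty$ forces the minimizer in $g(z)=\min_u f(z-u)+t\,q(u)$ to be $u=0$. This happens once $t$ exceeds the finite quantity $\sum\lambda_jf(x^j)-\min$, after a lower-bound argument on $f$ near $z$ obtained from a support of $f$ at some domain point. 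It yields $z\in J$. Supports for infinite $J$ follow by applying the finite case to convolutions with a fixed local kernel: the local optimality conditions involve only $3^n-1$ neighbours, so they are unaffected by truncation far away.

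The main obstacle is the local-to-global step in (b). We must show that the coordinatewise-parity neighbourhood certificate implies global support, using only the simultaneous exchange axiom, without strong base orderability. Our first attempt is induction on $\normone{x-y}$. Given $x\in L_r$ violating the support inequality, the exchange axiom between $x$ and $y$ produces $y'=y+s+t$ and $x'=x-s-t$ with $f(x)+f(y)\ge f(x')+f(y')$. We then want to choose the steps in the same coordinate, or compose two exchanges, so that $y'\in L_r$ is a local neighbour. Composing two exchanges to return to $L_r$ is the delicate point, and the example of Example~\ref{ex:parity} shows it must genuinely use coordinatewise parity.
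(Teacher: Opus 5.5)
Your proposal never produces a support vector, even for a finite domain, and that is the heart of the theorem.

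The induction on $n$ ("fix the last coordinate and use exchanges along pairs of steps") does not say how a support in dimension $n-1$ yields one in dimension $n$. The fallback through Farkas' lemma is circular. The local system $\ip{p}{u-y}\le f(u)-f(y)$, $u\in N_y$, is infeasible exactly when there are weights $\mu_u\ge0$ with $\sum_u\mu_u(u-y)=0$ and $\sum_u\mu_u(f(u)-f(y))<0$. That is, it fails exactly when Jensen's inequality fails at $y$ for some convex combination of its neighbours. So "local consistency" is itself an instance of the statement you want to prove. It is not a no-negative-cycle condition, since the vectors $u-y\in\{-2,0,2\}^n$ are not edge vectors of a graph. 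Nothing in the proposal derives it from the exchange axiom, and a single exchange does not: as you observe, an exchange with $s,t$ in different coordinates leaves $L_r$. For the same reason your induction on $\normone{x-y}$ stalls, and the local-to-global step you flag as the main obstacle remains unproved.

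The missing idea is to let infimal convolution do the projection instead of composing exchanges by hand. Write points as $(u,h)\in\Z^{n-1}\times\Z$. In Proposition~\ref{prop:support}, the induction hypothesis gives a support $a$ on the zero slice $\{h=0\}$. After subtracting $\ip{a}{u}$, one convolves with the indicator of a horizontal even box $K$ covering $V$ and restricts to the last axis, obtaining
\[
q(h)=\min\bigl\{F(u,h)-\ip{a}{u}:(u,h)\in V,\ u\in2\Z^{n-1}\bigr\}.
\]
By Theorem~\ref{thm:convolution} and Lemma~\ref{lem:reductions}, $q$ is a one-dimensional jump $M$-convex function with finite even domain, and the slice support gives $q(0)=F(0)$. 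Lemma~\ref{lem:one-dimensional} then supplies $b$, and $p=(a,b)$ is a support on $V\cap2\Z^n$.

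Theorem~\ref{thm:main} needs no global or local-to-global support at all. Restricting $f$ to a finite box containing $z$ and the $x^j$ suffices. Convolving $f$ with an even-box indicator, as you suggest, neither makes the domain finite nor preserves values. The $3^n-1$ neighbourhood characterization is a later consequence of the finite case.

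Your part (a) is a workable variant once supports are available. The paper's choice $G=\ind_{J\cap B}\mathbin{\Box}\psi$ is simpler: $G\ge0$ vanishes exactly on $J\cap B$, so $0\le G(z)\le\sum_j\lambda_jG(x^j)=0$ gives $z\in J$ with no scaling parameter and no lower bound on $f$.
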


In particular, membership of the barycenter in the effective domain is
a conclusion, rather than an additional hypothesis. The theorem applies
to infinite domains and arbitrary real convex coefficients.

Under the same assumptions on the weights and the barycenter, the
inequality also allows points $x^j\in L_r$ outside $J$.
After omitting zero-weight terms,
a value $f(x^j)=+\infty$ makes the right-hand side $+\infty$, so the
inequality is automatic but gives no feasibility conclusion. If all
remaining values are finite, Theorem~\ref{thm:main} applies.

The feasibility and value assertions have a common geometric
formulation. Write $J_r=J\cap L_r$ and
\[
E_r=\{(x,t)\in J_r\times\R:t\ge f(x)\}.
\]
Then Theorem~\ref{thm:main} implies
\begin{equation}\label{eq:intro-epigraph}
\conv(E_r)\cap(L_r\times\R)=E_r.
\end{equation}
Thus taking the convex hull of the epigraph introduces no new feasible
points and does not lower function values on the same parity class. The convex hull in this statement is the ordinary convex hull,
without taking its closure. This distinction allows infinite domains
without additional topological hypotheses.

Our proof proceeds through a finite-domain affine-support statement: at any
domain point, the function admits a common affine lower support on all
domain points of the same parity class. We prove this statement by
induction on dimension. A support on a coordinate slice determines a
linear perturbation; convolution with the indicator of a finite even
box then reduces the remaining support problem to one dimension.
This gives Jensen's inequality when the barycenter is already known to
be feasible. A second convolution, using a nonnegative quadratic
kernel with a unique zero, removes that preliminary feasibility
assumption.

The only non-elementary input to the proof of Theorem~\ref{thm:main}
is the convolution closure theorem of Kobayashi, Murota, and Tanaka
\cite{KMT2007}. Both functions in every convolution used below have finite effective
domains. All reductions, the support induction, and the feasibility
argument are proved below.

The finite-domain support theorem also yields a global conclusion that does
not require finiteness of the domain. If $z\in J_r$ and
$N_z=J\cap(z+\{-2,0,2\}^n)$, then a vector supports $f$ at $z$ on
all of $J_r$ if and only if it supports $f$ on $N_z$; such vectors
always exist (Theorem~\ref{thm:global-support}). Thus global supports
are specified by at most $3^n-1$ local linear inequalities. This leads
to an optimality criterion and an exact local formula for the sharp
growth of a linearly perturbed objective. Global supports also give
attained conjugate representations and extend Jensen's inequality,
including barycenter feasibility, to distributions with finite first
moment. Contact sets characterize equality, and the sharp growth
modulus bounds the Jensen gap quantitatively.

Section~\ref{sec:prelim} gives the definitions and elementary reductions.
Section~\ref{sec:support} proves the affine-support statement and Jensen's
inequality at feasible barycenters. Section~\ref{sec:feasibility} proves
the domain assertion and completes the main theorem.
Section~\ref{sec:global-support} proves the global support theorem and
its optimality and stability consequences. Section~\ref{sec:geometry}
develops exact convexification and attained duality, and shows why
coordinatewise parity is necessary. Section~\ref{sec:probability}
treats integrable distributions, equality, and quantitative Jensen bounds.

\section{Exchange convexity and finite reductions}\label{sec:prelim}

\subsection{The exchange axiom}

Let $e_1,\ldots,e_n$ be the standard basis of $\R^n$, and put
$\steps_n=\{\pm e_1,\ldots,\pm e_n\}$. For $x,y\in\Z^n$, a step $s\in\steps_n$ is
\emph{directed from $x$ to $y$} if
\[
\normone{x+s-y}=\normone{x-y}-1.
\]
For $s=\sigma e_i$, with $\sigma\in\{-1,1\}$, this is equivalent to
$\sigma(y_i-x_i)>0$. We also write
\[
[x,y]_{\Z}=\{w\in\Z^n:
\min(x_i,y_i)\le w_i\le\max(x_i,y_i)\text{ for every }i\}.
\]
A nonempty set $J\subseteq\Z^n$ has \emph{constant parity} if
$\sum_i x_i$ has the same parity for every $x\in J$.
This condition concerns total parity; it does not require $J$ to lie
in a single $L_r$.

\begin{samepage}
\begin{definition}\label{def:jumpm}
A function $f:\Z^n\to\R\cup\{+\infty\}$ is \emph{jump $M$-convex} if
its effective domain $J=\dom f=\{x:f(x)<+\infty\}$ is nonempty, has
constant parity, and satisfies the following exchange axiom.
For every $x,y\in J$ and every step $s$ directed from $x$ to $y$, there
exists a step $t$ directed from $x+s$ to $y$ such that
\begin{equation}\label{eq:exchange-domain}
x+s+t\in J,\qquad y-s-t\in J,
\end{equation}
and
\begin{equation}\label{eq:JM}
f(x)+f(y)\ge f(x+s+t)+f(y-s-t).
\end{equation}
\end{definition}
\end{samepage}

We will also use the corresponding set axiom, obtained by retaining
\eqref{eq:exchange-domain} and the quantifiers on $x,y,s,t$.
For $A\subseteq\Z^n$, its extended-valued indicator is
\[
\ind_A(x)=
\begin{cases}
0,&x\in A,\\
+\infty,&x\notin A.
\end{cases}
\]
If $A$ is nonempty, has constant parity, and satisfies the simultaneous
set exchange axiom, then $\ind_A$ is jump $M$-convex.

\subsection{Operations used in the proof}

\begin{lemma}\label{lem:reductions}
The following operations preserve jump $M$-convexity whenever the
resulting effective domain is nonempty:
translation of the argument by an integer vector; addition of a linear
function; fixing some coordinates; and restriction to an integer
coordinate box.
\end{lemma}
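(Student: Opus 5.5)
We check the four operations one at a time directly against Definition~\ref{def:jumpm}. In each case there are three things to verify for the new function $g$: its effective domain is nonempty (this is assumed), it has constant parity, and it satisfies the exchange axiom \eqref{eq:exchange-domain}--\eqref{eq:JM}. The general pattern is to take $x,y\in\dom g$ and a step $s$ directed from $x$ to $y$. We then carry them over to the corresponding points for $f$, apply the axiom for $f$ to obtain a step $t$, and carry the outputs back.

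\emph{Translation.} Let $g(x)=f(x+a)$ with $a\in\Z^n$, so that $\dom g=J-a$. The total parity shifts by the constant $\sum_i a_i$, so constant parity is preserved. Whether a step is directed from $x$ to $y$ depends only on $y-x$, so $s$ is directed from $x+a$ to $y+a$. The step $t$ supplied by the axiom for $f$ therefore works for $g$ unchanged.

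\emph{Adding a linear function.} Let $g(x)=f(x)+\ip{p}{x}$. The domain does not change. The key identity is $(x+s+t)+(y-s-t)=x+y$. It implies that the linear terms contribute the same amount, $\ip{p}{x+y}$, to both sides of \eqref{eq:JM}, so they cancel. Hence \eqref{eq:JM} for $f$ gives \eqref{eq:JM} for $g$ with the same $t$.

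\emph{Restriction to a box.} Let $g=f+\ind_B$, where $B=\{x:\ell\le x\le u\}$ is an integer box. A subset of a constant-parity set has constant parity. Take $x,y\in J\cap B$ and a step $s$ directed from $x$ to $y$, and let $t$ be the step given by the axiom for $f$.
\begin{itemize}
\item Since $s$ is directed from $x$ to $y$ and $t$ is directed from $x+s$ to $y$, we get $x+s+t\in[x,y]_\Z$. Here we must handle the case where $s$ and $t$ act on the same coordinate; then $y_i-x_i\ge2$ in the direction of $s$, so the point still stays in the interval.
\item Because $y-s-t=x+y-(x+s+t)$, each coordinate of $y-s-t$ is the reflection of the corresponding coordinate of $x+s+t$ within $[\min(x_i,y_i),\max(x_i,y_i)]$. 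So $y-s-t\in[x,y]_\Z$ as well.
\end{itemize}
The box $[x,y]_\Z$ lies inside $B$, so both output points lie in $B$ and the same $t$ works for $g$.

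\emph{Fixing coordinates.} Fixing some coordinates at given integer values is the box restriction in which those coordinates have $\ell_i=u_i$, followed by identifying the result with a function of the remaining coordinates. Because the fixed coordinates of $x$ and $y$ agree, no step directed from $x$ to $y$ moves them. Hence the exchange steps $s,t$ involve only the free coordinates. Constant parity also passes to the reduced function, since the fixed coordinates contribute a constant to the total parity.

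The only step that needs real care is the interval-containment claim for $x+s+t$ and $y-s-t$ in the box case. It comes down to a coordinatewise check of the directedness conditions $\sigma(y_i-x_i)>0$ and $\tau(y_j-(x+s)_j)>0$. The other three operations are immediate bookkeeping.
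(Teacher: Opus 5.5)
Your proof is correct and is essentially the paper's argument: translation and linear perturbation are handled the same way (the latter via $(x+s+t)+(y-s-t)=x+y$), and for the box you use the same two facts, $x+s+t\in[x,y]_\Z$ by directedness and $y-s-t=x+y-(x+s+t)$ by reflection. The only variation is that you obtain coordinate fixing as a degenerate box restriction, which needs unbounded intervals in the free coordinates; the box argument goes through unchanged there. The paper instead argues directly that $s$ cannot move a fixed coordinate, and that $t$ cannot either because $x+s$ and $y$ still agree there.
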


\begin{proof}
Translation preserves differences and directed steps. For linear
perturbations, the two exchange outputs have the same sum as the two
inputs, so the added linear terms cancel from \eqref{eq:JM}.

For a coordinate slice, the inputs agree in every fixed coordinate.
A directed first step cannot change any such coordinate. The second
step cannot change one either, since the first step left those
coordinates equal. Both outputs therefore remain in the slice.
Constant parity is inherited after subtracting the sum of the fixed
coordinates.

Finally, suppose that $x,y$ lie in an integer box $B$.
For any exchange supplied by \eqref{eq:JM}, directedness gives
$x'=x+s+t\in[x,y]_{\Z}$. Since $y'=y-s-t=x+y-x'$, we also have
$y'\in[x,y]_{\Z}$. The interval $[x,y]_{\Z}$ is contained in $B$.
Both outputs remain in the restricted domain, and their function
values are unchanged.
\end{proof}

For extended-valued functions $F,G$, define their infimal convolution by
\[
(F\mathbin{\Box}G)(w)=
\inf_{\substack{x,v\in\Z^n\\x+v=w}}\bigl(F(x)+G(v)\bigr).
\]
An empty set of finite candidates has infimum $+\infty$.
We use the following known theorem.

\begin{theorem}[Kobayashi--Murota--Tanaka]\label{thm:convolution}
If $F$ and $G$ are jump $M$-convex functions and
$F\mathbin{\Box}G$ never takes the value $-\infty$, then
$F\mathbin{\Box}G$ is jump $M$-convex.
\end{theorem}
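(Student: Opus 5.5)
The statement is the convolution closure theorem of \cite{KMT2007}, which the paper uses as a black box. If I had to reprove it, I would factor $F\mathbin{\Box}G$ as an \emph{aggregation}. Define $\Phi:\Z^{2n}\to\R\cup\{+\infty\}$ by $\Phi(x,v)=F(x)+G(v)$. Then
\[
(F\mathbin{\Box}G)(w)=\inf\{\Phi(x,v):x+v=w\},
\]
which is the image of $\Phi$ under the linear map $(x,v)\mapsto x+v$. That map merges coordinate $i$ with coordinate $n+i$ for each $i$. The proof then has three steps: show $\Phi$ is jump $M$-convex, show that merging two coordinates preserves jump $M$-convexity, and iterate the merge $n$ times, each intermediate function satisfying the no-$-\infty$ hypothesis.

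\textbf{Direct sum.} Constant parity of $\dom\Phi=J_F\times J_G$ is immediate, since total parities add. Take $(x,v),(x',v')$ in $\dom\Phi$ and a directed step $s$. If $s$ acts on the $x$-block, first I apply the $F$-exchange to $x,x'$. It supplies a step $t$ directed from $x+s$ to $x'$, which stays in the $x$-block. The $v$-block is left unchanged, so \eqref{eq:JM} for $\Phi$ follows by adding $G(v)+G(v')$ to both sides. The $v$-block case is symmetric.

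\textbf{Merging two coordinates.} Let $\varphi$ be jump $M$-convex on $\Z^{k+2}$ and set
\[
\psi(y,c)=\inf_{a+b=c}\varphi(y,a,b).
\]
Assume $\psi>-\infty$ everywhere. Constant parity of $\dom\psi$ is clear.

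Infima need not be attained, so first I handle that. Fix $w,w'\in\dom\psi$ and a step $s$ directed from $w$ to $w'$, and take $\varepsilon$-optimal preimages $p,p'$. Suppose that for each $\varepsilon$ I can produce some step $t_\varepsilon$ with
\[
\psi(w)+\psi(w')+2\varepsilon\ge\psi(w+s+t_\varepsilon)+\psi(w'-s-t_\varepsilon).
\]
There are only $2(k+1)$ possible steps, so some $t$ occurs for a sequence $\varepsilon\to0$, and the exact inequality follows.

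To produce $t_\varepsilon$, I would lift $s$ to a step $\hat s$ directed from $p$ to $p'$. If $s$ acts on the merged coordinate, say $s=+e$, then $a+b<a'+b'$, so $a<a'$ or $b<b'$, and one of these lifts is directed. Applying the exchange axiom of $\varphi$ gives $\hat t$ with $\varphi(p)+\varphi(p')\ge\varphi(p+\hat s+\hat t)+\varphi(p'-\hat s-\hat t)$. Projecting, the pair of outputs lies over $w+s+\pi(\hat t)$ and $w'-s-\pi(\hat t)$, where $\pi$ is the merging map.

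\textbf{Main obstacle.} The projection $\pi(\hat t)$ need not be a step directed from $w+s$ to $w'$. It can cancel inside the merged coordinate, as with $\hat s=+e_a$ and $\hat t=-e_b$, so that $\pi(\hat s+\hat t)$ is not of the required form $s+t$. It can also point the wrong way in the merged coordinate even though it was directed for the lifted pair.

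To repair this, among $\varepsilon$-optimal pairs $(p,p')$ I would choose one minimizing $\normone{p-p'}$. When a bad exchange occurs, the outputs form another pair over the same $(w,w')$. They are still $\varepsilon$-optimal, since the sum of values did not increase and each output is at least the infimum over its fibre. They are also strictly closer in $\ell_1$, contradicting minimality.

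More precisely, a cancelling exchange $\hat s+\hat t=e_a-e_b$ keeps both fibres fixed and moves $a$ toward $a'$. That contradicts minimality unless $a=a'$ and $b=b'$ on the relevant side, and in that case no such cancelling step is directed. I expect this bookkeeping to be the delicate part of the proof: checking, case by case, that every undesired $\hat t$ either contradicts minimality or still projects to a legitimate directed step, using that minimality forces $a-a'$ and $b-b'$ to have the same sign.
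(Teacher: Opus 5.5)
\textbf{The gap is in the merge step.} The paper does not prove this theorem; it imports it from \cite{KMT2007}. Your proposal is therefore an independent proof. Its outer layer is sound. $\Phi(x,v)=F(x)+G(v)$ is jump $M$-convex, and $F\mathbin{\Box}G$ is the $n$-fold merge of $\Phi$. Every intermediate merge avoids $-\infty$ because it dominates $F\mathbin{\Box}G$ at the corresponding point. Pigeonholing over the finitely many steps handles non-attained infima. Your cancelling case also works. If $s=+e_c$ acts on the merged coordinate, with $\hat s=+e_a$ and $\hat t=-e_b$, then $b>b'$ and $a+b<a'+b'$ force $a'-a\ge2$, so the outputs are strictly closer. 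However, the merge step carries the whole content of the theorem, and your argument for it does not go through.

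\textbf{Why the wrong-way case fails.} Suppose $\pi(\hat t)=\tau e_c$ is not directed from $w+s$ to $w'$. Then the outputs lie over $w+s+\tau e_c$ and $w'-s-\tau e_c$, not over $w$ and $w'$. They do not compete in your minimisation of $\normone{p-p'}$, so no contradiction arises. Your fallback claim, that minimality forces $a-a'$ and $b-b'$ to have the same sign, is false. Realigning the pair would require the exchange axiom at $(p,p')$ with step $+e_a$ to return $-e_b$, but the axiom only promises \emph{some} step. For a concrete instance, take $k=1$, coordinates $(y,a,b)$, and $\varphi=\ind_J$ with
\[
J=\{(0,0,0),(1,1,0),(1,0,-1),(2,1,-1)\}.
\]
All coordinate sums are even. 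Every pair except $(0,0,0),(2,1,-1)$ is at $\ell_1$-distance $2$, where the axiom is trivial. For that remaining pair, every directed first step, in either direction, can be completed so that the outputs are $(1,1,0)$ and $(1,0,-1)$. Thus $J$ is a constant-parity jump system. Over $w=(0,0)$ and $w'=(2,0)$ the preimages are unique: $p=(0,0,0)$ and $p'=(2,1,-1)$, with $a-a'=-1$ and $b-b'=1$. For $s=+e_y$ the lift $\hat s=+e_y$ is forced. Since $(2,0,0)\notin J$, the only admissible $\hat t$ are $+e_a$ and $-e_b$. Both project to $\pm e_c$, which is not directed from $(1,0)$ to $(2,0)$. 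Your procedure therefore produces no step, even though $\psi$ does satisfy the axiom here (trivially, with $t=+e_y$). Handling wrong-way outputs, which land in other fibres, needs an additional idea that the proposal does not contain. Supplying that idea is exactly the substance of the cited theorem.
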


This is the convolution closure theorem of~\cite{KMT2007}; see also
Theorem~12 in the authors' \href{https://www.keisu.t.u-tokyo.ac.jp/data/2006/METR06-13.pdf}{technical report METR~2006--13}.
It applies to the exchange axiom in
Definition~\ref{def:jumpm} and requires no SBO hypothesis.
In every application below, both effective domains are finite.
Consequently, each finite convolution value is an attained minimum,
the value $-\infty$ is impossible, and
\begin{equation}\label{eq:domain-convolution}
\dom(F\mathbin{\Box}G)=\dom F+\dom G.
\end{equation}

\subsection{One-dimensional supports and even boxes}

\begin{lemma}\label{lem:one-dimensional}
Let $q:\Z\to\R\cup\{+\infty\}$ be jump $M$-convex, and suppose that
its effective domain $D$ is finite, contained in $2\Z$, and contains
$0$. There exists $b\in\R$ such that
\begin{equation}\label{eq:one-dimensional-support}
q(k)\ge q(0)+bk\qquad(k\in D).
\end{equation}
\end{lemma}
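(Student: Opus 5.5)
In one dimension the exchange axiom of Definition~\ref{def:jumpm} reduces to interval-ness of $D$ on $2\Z$ together with convexity of the second differences. The plan is to extract these two facts and then take $b$ between the left and right slopes at $0$.

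\emph{Step 1: the one-dimensional exchange.} Take $x,y\in D$ with $x<y$. Since $D\subseteq 2\Z$, we have $y\ge x+2$. The only step directed from $x$ to $y$ is $s=+1$. Because $y\ge x+2>x+1$, the only step directed from $x+1$ to $y$ is $t=+1$. The axiom therefore gives
\[
x+2\in D,\qquad y-2\in D,\qquad q(x)+q(y)\ge q(x+2)+q(y-2).
\]
Iterating the membership conclusion shows that $D$ is a set of consecutive even integers $\{2a,2a+2,\ldots,2c\}$ with $a\le 0\le c$.

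\emph{Step 2: monotone slopes.} For $k$ with $k,k+2\in D$, put $d_k=(q(k+2)-q(k))/2$. Applying Step 1 with $y=x+4$ gives
\[
q(x)+q(x+4)\ge 2q(x+2),
\]
that is, $d_x\le d_{x+2}$. So the slopes $d_k$ are nondecreasing along $D$.

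\emph{Step 3: choice of $b$.} If $D=\{0\}$, any $b$ works. Otherwise choose $b$ as follows:
\begin{itemize}
\item if both $d_{-2}$ and $d_0$ are defined, take $b\in[d_{-2},d_0]$, which is nonempty by Step 2;
\item if only $d_0$ is defined, take $b=d_0$;
\item if only $d_{-2}$ is defined, take $b=d_{-2}$.
\end{itemize}

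\emph{Step 4: telescoping.} For $k=2j>0$ in $D$,
\[
q(k)-q(0)=2\sum_{i=0}^{j-1}d_{2i}\ge 2j\,d_0\ge bk.
\]
For $k=-2j<0$ in $D$,
\[
q(0)-q(k)=2\sum_{i=1}^{j}d_{-2i}\le 2j\,d_{-2}\le -bk,
\]
which rearranges to $q(k)\ge q(0)+bk$. This proves \eqref{eq:one-dimensional-support}.

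The only point needing care is Step 1: one must check that the directed second step is forced to be $+1$, which holds because the parity assumption guarantees $y-x\ge 2$. With that in place, the rest is routine.
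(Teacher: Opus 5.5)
Your proof is correct and follows the paper's argument essentially step for step. Both use the forced $+1,+1$ exchange to show that $D$ is an interval of even integers, the inequality $q(x)+q(x+4)\ge 2q(x+2)$ to get nondecreasing slopes, and telescoping from $0$ with $b$ chosen between the left and right slopes. The only cosmetic difference is that the paper fixes $b$ to be the left slope whenever it exists, whereas you allow any value in $[d_{-2},d_0]$.
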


\begin{proof}
If $a<b$ belong to $D$, the first and second steps directed from $a$
to $b$ must both be $+1$. Thus $a+2,b-2\in D$. Iterating shows that
\[
D=\{2j:A\le j\le B\}
\]
for integers $A\le0\le B$. For $A<j<B$, apply \eqref{eq:JM} to
$2j-2$ and $2j+2$ to obtain
\[
q(2j-2)+q(2j+2)\ge2q(2j).
\]
Hence the adjacent slopes
\[
d_j=\frac{q(2j+2)-q(2j)}2,\qquad A\le j<B,
\]
are nondecreasing.

If $A<0$, choose $b=d_{-1}$; if $A=0<B$, choose $b=d_0$; if
$A=B=0$, choose $b=0$. For $r>0$ with $2r\in D$, telescoping gives
\[
q(2r)-q(0)=2\sum_{j=0}^{r-1}d_j\ge2rb.
\]
For $r>0$ with $-2r\in D$, we have $A<0$ and
\[
q(0)-q(-2r)=2\sum_{j=-r}^{-1}d_j\le2rb.
\]
These inequalities prove \eqref{eq:one-dimensional-support},
including the endpoint and singleton cases.
\end{proof}

\begin{lemma}\label{lem:even-box}
Let $L_1,\ldots,L_n$ be nonnegative even integers, and put
\[
K=\prod_{i=1}^n([-L_i,L_i]\cap2\Z).
\]
Both $\ind_K$ and the function equal to $\normtwo{v}^2$ on $K$
and $+\infty$ outside $K$ are jump $M$-convex.
\end{lemma}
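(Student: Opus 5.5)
The plan is to verify Definition~\ref{def:jumpm} directly for both functions, using the same exchange in each case: repeat the given step, i.e.\ take $t=s$. Write $\phi$ for the function equal to $\normtwo{v}^2$ on $K$ and $+\infty$ elsewhere. Both $\ind_K$ and $\phi$ have effective domain $K$. This set is nonempty, since it contains $0$, and every coordinate of every point of $K$ is even. Hence every point of $K$ has even coordinate sum, so $K$ has constant parity.

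For the exchange axiom, fix $x,y\in K$ and a step $s=\sigma e_i$ directed from $x$ to $y$, so that $\sigma(y_i-x_i)>0$. Since $x_i$ and $y_i$ are both even, this gives $\sigma(y_i-x_i)\ge2$. After the move, $\sigma(y_i-(x_i+\sigma))\ge1>0$, so $s$ is still directed from $x+s$ to $y$, and we may take $t=s$.

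The outputs are $x+2s$ and $y-2s$. They differ from $x$ and $y$ only in coordinate $i$, where the new values are $x_i+2\sigma$ and $y_i-2\sigma$. Because $|y_i-x_i|\ge2$, both values lie in the closed interval between $x_i$ and $y_i$. They are also even. So both values lie in $[-L_i,L_i]\cap2\Z$, and both outputs lie in $K$. This proves \eqref{eq:exchange-domain} and settles the claim for $\ind_K$, since \eqref{eq:JM} then reads $0\ge0$.

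For $\phi$, only the $i$-th coordinate contributes to the difference in \eqref{eq:JM}, because $\phi$ is separable. That difference is
\[
x_i^2+y_i^2-(x_i+2\sigma)^2-(y_i-2\sigma)^2
=4\sigma(y_i-x_i)-8=4|y_i-x_i|-8\ge0 .
\]
The final inequality again uses $|y_i-x_i|\ge2$. This proves \eqref{eq:JM} for $\phi$.

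There is no serious obstacle. The only point needing care is that evenness upgrades a directed step to a gap of at least $2$ in coordinate $i$. That single fact gives the directedness of $t$, membership of both outputs in the box, and the nonnegativity of the quadratic difference.
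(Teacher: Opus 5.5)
Your proof is correct and follows the paper's argument essentially verbatim. Both take $t=s$, use evenness to get $\sigma(y_i-x_i)\ge2$, place both outputs $x+2s$ and $y-2s$ in $K$, and compute the quadratic difference as $4\sigma(y_i-x_i)-8\ge0$; your explicit checks of constant parity and of the directedness of $t$ from $x+s$ to $y$ make precise points the paper leaves implicit.
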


\begin{proof}
The set $K$ is nonempty, and every point in $K$ has even coordinates.
Let $a,b\in K$ and let $s=\sigma e_i$ be directed from $a$ to $b$.
The integer $d=\sigma(b_i-a_i)$ is positive and even, so $d\ge2$.
We may therefore choose $t=s$. Both outputs $a+2s$ and $b-2s$
belong to $K$: they are even and lie coordinatewise between $a$
and $b$. This proves the assertion for $\ind_K$.
For the quadratic function, all unchanged coordinates cancel and
\begin{align*}
&\normtwo{a}^2+\normtwo{b}^2
 -\normtwo{a+2s}^2-\normtwo{b-2s}^2\\
&\hspace{25mm}=4\sigma(b_i-a_i)-8=4(d-2)\ge0.
\end{align*}
The same choice of $t$ proves \eqref{eq:JM}. Coordinates with
$L_i=0$ admit no directed first step and require no separate argument.
\end{proof}

\section{Affine supports on finite domains}\label{sec:support}

The central inductive statement produces one support vector valid
simultaneously at every domain point in the parity class of the origin.

\begin{proposition}\label{prop:support}
Let $F:\Z^n\to\R\cup\{+\infty\}$ be jump $M$-convex with finite
effective domain $V$ containing $0$. There exists $p\in\R^n$ such that
\begin{equation}\label{eq:support}
F(v)\ge F(0)+\ip{p}{v}\qquad(v\in V\cap2\Z^n).
\end{equation}
\end{proposition}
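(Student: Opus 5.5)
We argue by induction on $n$, following the route sketched in the introduction. For $n=1$ we apply Lemma~\ref{lem:one-dimensional}, but not to $F$ directly, since its domain need not lie in $2\Z$. Instead we apply it to the restriction of $F$ to the even class $V\cap2\Z$. Restriction to a parity class is not among the operations of Lemma~\ref{lem:reductions}, so we check the one-dimensional case by hand. If $a<b$ are even points of $V$, then both directed steps are $+1$, so $a+2$ and $b-2$ lie in $V$ and are again even. Hence the exchange axiom restricted to the even points is the exchange axiom of a jump $M$-convex function on $2\Z$. Lemma~\ref{lem:one-dimensional} then supplies a slope $b$ with $F(2j)\ge F(0)+2jb$ for all even points of $V$.

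For the inductive step, fix the last coordinate to be $0$. By Lemma~\ref{lem:reductions}, the slice $F_0(u)=F(u,0)$ is jump $M$-convex on $\Z^{n-1}$ with finite domain containing $0$. The induction hypothesis gives $p'\in\R^{n-1}$ supporting $F_0$ on the even points of its domain. Next replace $F$ by $\tilde F(v)=F(v)-F(0)-\ip{(p',0)}{v}$, which is again jump $M$-convex by Lemma~\ref{lem:reductions}. Then $\tilde F(0)=0$ and $\tilde F\ge0$ on the even points of the slice $v_n=0$.

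To reduce the remaining coordinate to one dimension, convolve with the indicator of an even box. Let $L$ be an even integer exceeding twice the maximum of $\normone{\cdot}$ over $V$, and put $K=([-L,L]\cap2\Z)^{n-1}\times\{0\}$. Then $\ind_K$ is jump $M$-convex: it is the function of Lemma~\ref{lem:even-box} with $L_n=0$. By Theorem~\ref{thm:convolution}, $G=\tilde F\mathbin{\Box}\ind_K$ is jump $M$-convex. Now fix the first $n-1$ coordinates of $G$ to $0$ and define
\[
q(k)=\min\{\tilde F(v): v\in V,\ v_n=k,\ v_i\in2\Z\ (i<n)\}.
\]
The choice of $L$ makes the box large enough to reach every $v\in V$ from the origin slice. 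The convolution at the points $(0,k)$, however, also mixes in odd residues, so this step must be handled with care. My intended fix is to first convolve and then restrict to a slice, choosing which slice so that only the even class contributes. Concretely, I would work with $G$ at the points $(0,\dots,0,k)$, restricted to even $k$. Odd-residue contributions do occur there, but they only make $q$ smaller, and a lower bound on $q$ is exactly what we need. So it suffices to find $b$ with $G(0,k)\ge bk$ for even $k$ in the domain, and then $p=(p',b)$ works. For an even point $v\in V$ we have $G(0,v_n)\le\tilde F(v)$, which gives $\tilde F(v)\ge b v_n$.

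The remaining points are to check that $G(0,0)=0$ and that the slice $k\mapsto G(0,k)$ on even $k$ has a support at $0$. For the second, I would apply the $n=1$ argument (one-dimensional slice plus Lemma~\ref{lem:one-dimensional}) to this slice, which is jump $M$-convex by Lemma~\ref{lem:reductions}. I expect $G(0,0)=0$ to be the main obstacle. It requires $\tilde F(w)\ge0$ for every $w\in V$ with $w_n=0$ and $-w\in K$, which means $w$ has even first coordinates. The inductive support only controls even points, and $w_n=0$ is even, so such $w$ lie in the even class of the slice. Hence $G(0,0)=\min(\ldots)\ge0$, with equality attained at $w=0$. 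If odd-residue points did spoil the bound, I would instead first restrict $F$ to the box $\{v:|v_i|\le\text{large}\}$ and handle the issue through the parity-preservation argument of Lemma~\ref{lem:even-box}.
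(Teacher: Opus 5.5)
Your proposal is correct and follows essentially the paper's proof: a slice support obtained by induction, a linear tilt, convolution with a horizontal even box, and Lemma~\ref{lem:one-dimensional} on the last axis, giving $p=(p',b)$. Your parity worries are unfounded but harmless. For $n=1$, constant parity together with $0\in V$ already forces $V\subseteq2\Z$. On the axis, the box $K$ forces the horizontal part of every decomposition of $(0,k)$ to be even, so total parity forces $k$ to be even and $G(0,k)$ equals your $q(k)$ exactly, as in the paper's formula \eqref{eq:axis-formula}.
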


\begin{proof}
We argue by induction on $n$. For $n=1$, constant parity and
$0\in V$ imply $V\subseteq2\Z$, so
Lemma~\ref{lem:one-dimensional} applies.

Suppose $n\ge2$ and the assertion holds in dimension $n-1$.
Write a point of $\Z^n$ as $(u,h)$ with
$u\in\Z^{n-1}$ and $h\in\Z$. The zero slice
\[
F_0(u)=F(u,0)
\]
is jump $M$-convex by Lemma~\ref{lem:reductions}, with finite
effective domain containing $0$. By the induction hypothesis,
there exists $a\in\R^{n-1}$ such that
\begin{equation}\label{eq:slice-support}
F(u,0)-\ip{a}{u}\ge F(0)
\quad\text{if }(u,0)\in V,\quad u\in2\Z^{n-1}.
\end{equation}
Define
\[
\widetilde F(u,h)=F(u,h)-\ip{a}{u}
\]
on $V$, and set it equal to $+\infty$ outside $V$.
This is a jump $M$-convex function with the same effective domain.

Choose nonnegative even integers $L_1,\ldots,L_{n-1}$ satisfying
\[
L_i\ge\max_{v\in V}|v_i|\qquad(1\le i<n),
\]
and form the finite horizontal even box
\[
K=\left(\prod_{i=1}^{n-1}([-L_i,L_i]\cap2\Z)\right)\times\{0\}.
\]
By Lemma~\ref{lem:even-box}, its indicator is jump $M$-convex.
Theorem~\ref{thm:convolution} now shows that
\[
H=\widetilde F\mathbin{\Box}\ind_K
\]
is jump $M$-convex, with finite effective domain $V+K$ containing $0$.

Restrict $H$ to the last coordinate axis:
\[
q(h)=H(0,h).
\]
This is a one-dimensional jump $M$-convex function by
Lemma~\ref{lem:reductions}. Since $0\in V$, all points of $V$
have even total parity; the same is true of $V+K$.
An axis point $(0,h)\in V+K$ therefore has $h\in2\Z$.
Thus $\dom q$ is finite, contained in $2\Z$, and contains $0$.

On this axis, the convolution formula simplifies as follows.
A decomposition with finite cost of $(0,h)$ must be
$(0,h)=(u,h)+(-u,0)$, with $(u,h)\in V$ and $(-u,0)\in K$.
The bounds $L_i$ include the horizontal coordinates of every point in $V$.
It follows that, for every $h\in\Z$,
\begin{equation}\label{eq:axis-formula}
q(h)=
\min_{\substack{(u,h)\in V\\u\in2\Z^{n-1}}}
\bigl(F(u,h)-\ip{a}{u}\bigr),
\end{equation}
with value $+\infty$ when there is no candidate.
For $h=0$, every candidate is at least $F(0)$ by
\eqref{eq:slice-support}, while $u=0$ is a candidate of value $F(0)$.
Hence
\begin{equation}\label{eq:anchor}
q(0)=F(0).
\end{equation}

Lemma~\ref{lem:one-dimensional} supplies $b\in\R$ such that
$q(h)\ge q(0)+bh$ for every $h\in\dom q$.
If $v=(u,h)\in V\cap2\Z^n$, then $(u,h)$ is a candidate in
\eqref{eq:axis-formula}, so $h\in\dom q$. Consequently,
\[
F(u,h)-\ip{a}{u}\ge q(h)\ge F(0)+bh.
\]
Taking $p=(a,b)$ proves \eqref{eq:support}.
The vector $a$ was fixed by the entire zero slice, and $b$ by the
entire axis function; neither depends on the final choice of $v$.
\end{proof}

\begin{proposition}[Jensen inequality at feasible barycenters]\label{prop:anchored}
Let $f:\Z^n\to\R\cup\{+\infty\}$ be jump $M$-convex with effective
domain $J$, and let $m\ge1$ be an integer. Suppose $z,x^1,\ldots,x^m\in J$,
$x^j-z\in2\Z^n$, and $\lambda_1,\ldots,\lambda_m\ge0$ satisfy
$\sum_j\lambda_j=1$ and $\sum_j\lambda_jx^j=z$. Then
$f(z)\le\sum_j\lambda_jf(x^j)$.
\end{proposition}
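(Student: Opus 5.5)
The idea is to reduce to Proposition~\ref{prop:support} by translating $z$ to the origin and restricting $f$ to a finite box containing all the data points.

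First translate. Define $g(w)=f(z+w)$. By Lemma~\ref{lem:reductions}, $g$ is jump $M$-convex, and its effective domain $J-z$ contains $0$ and the points $y^j=x^j-z\in2\Z^n$. The hypotheses become $\sum_j\lambda_jy^j=0$, and the goal becomes $g(0)\le\sum_j\lambda_jg(y^j)$.

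Next restrict to a box. Choose an integer box $B$ containing $0,y^1,\ldots,y^m$, for instance
\[
B=\prod_i[-M,M]
\quad\text{with}\quad
M\ge\max_j\normone{y^j}.
\]
Put $F=g+\ind_B$, that is, $g$ restricted to $B$. By Lemma~\ref{lem:reductions}, $F$ is jump $M$-convex, since its domain contains $0$ and so is nonempty. Its effective domain $V=(J-z)\cap B$ is finite and contains $0$. Thus all hypotheses of Proposition~\ref{prop:support} hold.

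Then apply the proposition. Proposition~\ref{prop:support} gives $p\in\R^n$ with
\[
F(v)\ge F(0)+\ip{p}{v}\qquad(v\in V\cap2\Z^n).
\]
Each $y^j$ lies in $V\cap2\Z^n$, because $y^j\in J-z$, $y^j\in B$, and $y^j$ is even. Hence
\[
g(y^j)\ge g(0)+\ip{p}{y^j}\qquad\text{for every }j.
\]
Multiply by $\lambda_j\ge0$ and sum. Since $\sum_j\lambda_j=1$ and $\sum_j\lambda_jy^j=0$, the linear terms cancel:
\[
\sum_j\lambda_jg(y^j)\ge g(0).
\]
Translating back gives $f(z)\le\sum_j\lambda_jf(x^j)$.

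There is no real obstacle here; all the difficulty is inside Proposition~\ref{prop:support}. The only points to check are the following.
\begin{itemize}
\item Finiteness of the domain is exactly what the box restriction supplies, which is why the statement allows infinite $J$.
\item Restricting to the box preserves the values of $f$ at the points we use.
\item The parity hypothesis $x^j-z\in2\Z^n$ is exactly what places each $y^j$ in the even class where the support inequality is valid.
\end{itemize}
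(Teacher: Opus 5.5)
Your proposal is correct and follows the paper's proof essentially step for step. Both translate $z$ to the origin and use Lemma~\ref{lem:reductions} to restrict to a finite integer box containing $0$ and all displacements $x^j-z$. Both then apply Proposition~\ref{prop:support} and sum the resulting support inequalities with the real weights $\lambda_j$, so that the linear terms cancel.
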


\begin{proof}
Translate by $z$ and put $d^j=x^j-z\in2\Z^n$.
Choose a finite integer coordinate box $B$ containing $0$ and all
the $d^j$. Let $F$ agree with $w\mapsto f(z+w)$ on $B$ and
take the value $+\infty$ outside $B$. By
Lemma~\ref{lem:reductions}, $F$ is jump $M$-convex with a finite
effective domain containing $0,d^1,\ldots,d^m$.
Proposition~\ref{prop:support} yields $p\in\R^n$ with
\[
f(x^j)\ge f(z)+\ip{p}{x^j-z}\qquad(1\le j\le m).
\]
Multiplying by the original real coefficients and summing gives
\[
\sum_j\lambda_jf(x^j)
\ge f(z)+\ip{p}{\sum_j\lambda_jx^j-z}
=f(z).
\]
No rational approximation of the coefficients is required.
\end{proof}

\begin{remark}\label{rem:finite-support}
The preceding proof chooses a support in a finite box and does not
need uniform control as the box varies. Section~\ref{sec:global-support}
establishes the stronger assertion that one vector supports the entire
parity class, by showing that a fixed finite neighborhood determines
all global supports.
\end{remark}

\section{Feasibility of same-parity barycenters}\label{sec:feasibility}

The next step proves feasibility by applying
Proposition~\ref{prop:anchored} to an auxiliary function whose
effective domain is known to contain the barycenter.

\begin{proposition}\label{prop:feasibility}
Let $J\subseteq\Z^n$ be nonempty, have constant parity, and satisfy
the simultaneous set exchange axiom. Fix $r\in\{0,1\}^n$ and an integer
$m\ge1$. Suppose
$x^1,\ldots,x^m\in J\cap L_r$ and
\[
\lambda_j\ge0,\qquad \sum_j\lambda_j=1,\qquad
z=\sum_j\lambda_jx^j\in L_r.
\]
Then $z\in J$.
\end{proposition}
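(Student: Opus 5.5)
We reduce feasibility to a value inequality for an auxiliary function. That function is built by convolving the indicator of a finite piece of $J$ with the quadratic kernel of Lemma~\ref{lem:even-box}. Proposition~\ref{prop:anchored} will force the auxiliary function to vanish at $z$, and vanishing can only happen at points of $J$.

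First I would localize. Since $z$ is a convex combination of the $x^j$, it lies in every integer coordinate box containing $x^1,\ldots,x^m$. Choose such a box $B$ and put $F=\ind_{J\cap B}$. Because $\ind_J$ is jump $M$-convex, Lemma~\ref{lem:reductions} (restriction to a box) shows that $F$ is jump $M$-convex with finite effective domain $J\cap B$ containing every $x^j$. Next choose nonnegative even integers $L_i$ with $L_i\ge\max_{w,w'\in B}|w_i-w'_i|$. Let $K=\prod_i([-L_i,L_i]\cap2\Z)$, and let $G$ equal $\normtwo{v}^2$ on $K$ and $+\infty$ elsewhere; $G$ is jump $M$-convex by Lemma~\ref{lem:even-box}. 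Both domains are finite, so Theorem~\ref{thm:convolution} and \eqref{eq:domain-convolution} show that $H=F\mathbin{\Box}G$ is jump $M$-convex with
\[
H(w)=\min\bigl\{\normtwo{w-x}^2: x\in J\cap B,\ w-x\in K\bigr\},
\qquad \dom H=(J\cap B)+K.
\]

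I would then check the hypotheses of Proposition~\ref{prop:anchored} for $H$ at the points $x^1,\ldots,x^m$ and $z$.

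First, $H(x^j)=0$, using $x=x^j$ and $v=0\in K$.

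Second, $z\in\dom H$. Since $z,x^1\in L_r$, the vector $z-x^1$ lies in $2\Z^n$. Both $z$ and $x^1$ lie in $B$, so $|z_i-x^1_i|\le L_i$, and hence $z-x^1\in K$.

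Third, $x^j-z\in2\Z^n$ because all of these points lie in $L_r$.

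The weights are the given $\lambda_j$, with $\sum_j\lambda_j x^j=z$. Proposition~\ref{prop:anchored} then yields $H(z)\le\sum_j\lambda_jH(x^j)=0$.

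Finally, the minimum defining $H(z)$ is attained over a finite nonempty set. So some $x\in J\cap B$ satisfies $\normtwo{z-x}^2=H(z)\le0$, which forces $x=z$ and hence $z\in J$.

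The main point to get right is choosing $K$ so that $z-x^1\in K$ while $G$ stays jump $M$-convex; this is why $K$ must be an even box large enough to cover the diameter of $B$. The essential idea is that the kernel is nonnegative with a unique zero at the origin. That property turns the value inequality supplied by Proposition~\ref{prop:anchored} into exact membership of $z$ in $J$.
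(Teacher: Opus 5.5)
Your proposal is correct and follows the paper's proof essentially step for step: localize to $J\cap B$, convolve $\ind_{J\cap B}$ with the quadratic even-box kernel, apply Proposition~\ref{prop:anchored} to obtain $0\le H(z)\le 0$, and then use the kernel's unique zero to conclude $z\in J$. The only differences are cosmetic. You size $K$ by the coordinate diameters of $B$ rather than by $\max_j|z_i-x^j_i|$. You also keep zero-weight terms, which is harmless because every $x^j$ already lies in the domain.
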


\begin{proof}
Delete any zero-weight terms; at least one term remains.
Take a finite integer coordinate box $B$ containing $z$ and all
the $x^j$, and set $V=J\cap B$. The set $V$ is nonempty and finite.
The interval argument of Lemma~\ref{lem:reductions} shows that
$V$ retains the simultaneous set exchange axiom.
Thus $\ind_V$ is jump $M$-convex.

For each coordinate set
\[
L_i=\max_{1\le j\le m}|z_i-x_i^j|,
\qquad
K=\prod_{i=1}^n([-L_i,L_i]\cap2\Z).
\]
The $L_i$ are nonnegative even integers.
Both $0$ and all displacements $z-x^j$ belong to $K$.
Define
\[
\psi(k)=
\begin{cases}
\normtwo{k}^2,&k\in K,\\
+\infty,&k\notin K.
\end{cases}
\]
By Lemma~\ref{lem:even-box}, $\psi$ is jump $M$-convex.
It is nonnegative and vanishes only at $0$.

Consider
\[
G=\ind_V\mathbin{\Box}\psi,
\qquad
G(w)=\min_{\substack{v\in V,\ k\in K\\v+k=w}}\normtwo{k}^2.
\]
Both functions have finite nonempty effective domains, so
Theorem~\ref{thm:convolution} applies.
The resulting jump $M$-convex function has effective domain $V+K$.
It is nonnegative. Moreover,
\begin{equation}\label{eq:zero-set}
G(w)=0\quad\Longleftrightarrow\quad w\in V.
\end{equation}
Indeed, a point $w\in V$ has the decomposition $w=w+0$.
Conversely, a minimum equal to zero is attained, and its quadratic
term forces $k=0$, hence $w=v\in V$.

In particular, $G(x^j)=0$ for every $j$.
The explicit decomposition
\[
z=x^1+(z-x^1)
\]
has its first term in $V$ and its second in $K$, so
$z\in\dom G$. This assertion uses no membership assumption on
$z$ in the original set $J$.
We may therefore apply Proposition~\ref{prop:anchored} to $G$,
the same points, and the same coefficients. It gives
\[
0\le G(z)\le\sum_j\lambda_jG(x^j)=0.
\]
Equation~\eqref{eq:zero-set} now yields $z\in V\subseteq J$.
\end{proof}

\begin{proof}[Proof of Theorem~\ref{thm:main}]
The effective domain of $f$ satisfies the set assumptions of
Proposition~\ref{prop:feasibility}. That proposition gives $z\in J$.
Proposition~\ref{prop:anchored}, applied to the original function
$f$, then gives the required inequality.
\end{proof}

\section{Global supports and local optimality}\label{sec:global-support}

The finite-domain support statement also gives a finite description of
the set of global support vectors, even when the effective domain is
infinite. Put
\[
K=\{-2,0,2\}^n.
\]
We first establish a local optimality criterion without restricting a
jump $M$-convex function to a parity class as an intermediate operation.

\begin{lemma}[Local optimality on a parity class]\label{lem:parity-local}
Let $g$ be jump $M$-convex with $0\in\dom g$. If
\[
g(u)\ge g(0)\qquad(u\in\dom g\cap K),
\]
then $g(x)\ge g(0)$ for every $x\in\dom g\cap2\Z^n$.
\end{lemma}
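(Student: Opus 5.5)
The plan is a minimal-counterexample argument that uses the exchange axiom directly, not the convolution machinery. Suppose the conclusion fails. First I would reduce to a finite domain. Pick $x\in\dom g\cap2\Z^n$ with $g(x)<g(0)$, restrict $g$ to a finite integer box containing $0$ and $x$, and note that by Lemma~\ref{lem:reductions} the restriction is still jump $M$-convex. The local hypothesis on $\dom g\cap K$ survives, since restriction only removes points. Among the finitely many even points $x\ne0$ of the restricted domain with $g(x)<g(0)$, I choose one that minimizes $\normone{x}$ and, among those, minimizes $g(x)$. The hypothesis gives $x\notin K$, so $\normone{x}\ge 4$ or some $|x_i|\ge4$.

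Next I apply the exchange axiom to the pair $(0,x)$ with a step $s=\sigma e_i$ directed from $0$ to $x$. It returns $t$ with $a=s+t$ and $b=x-s-t$ both in $\dom g$ and $g(a)+g(b)\le g(0)+g(x)$. In the favourable case $t=s$, both $a=2s$ and $b=x-2s$ are even. Moreover $2s\in K$, so $g(2s)\ge g(0)$, and therefore $g(x-2s)\le g(x)<g(0)$. If $x-2s=0$ then $x=2s\in K$, which contradicts the choice of $x$. Otherwise $x-2s$ is an even counterexample with $\normone{x-2s}=\normone{x}-2$, contradicting minimality. I would also try to arrange that the step $s$ is taken in a coordinate with $|x_i|$ maximal; this may help in the next case.

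The main obstacle is the case $t=\tau e_j$ with $j\ne i$. Then $a=s+t$ is odd in two coordinates, so the local hypothesis says nothing about $a$. My first attempt would be to exploit the fact that $a$ and $b$ lie in a common parity class $L_\rho$ with $\rho\equiv e_i+e_j\pmod 2$. I would run the exchange axiom a second time: on the pair $(b,x)$ with the step $s$ directed from $b$ to $x$, and dually on $(a,0)$ with $-s$. The aim is that the resulting outputs include the even points $0+s+t'$ and $x-s-t'$ with $t'=s$, or a point of $K$ together with an even point strictly closer to $0$ than $x$. Summing the two exchange inequalities, I expect the odd terms $g(a),g(b)$ to cancel, which would leave an inequality among even points of the form $g(k)+g(x')\le g(0)+g(x)$ with $k\in K$ and $\normone{x'}<\normone{x}$. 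The case analysis according to whether the second step returns $e_j$-type or $s$-type moves is the delicate part.

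If this bookkeeping becomes unwieldy, my fallback is the paper's own device. I would replace $g$ by $g\mathbin{\Box}\ind_{K_0}$ for a suitable small even box $K_0$, or use Proposition~\ref{prop:support} on the restricted finite function to obtain a linear perturbation $\ip{p}{\cdot}$ that makes $0$ a minimizer on the even points of the box. The perturbed function inherits the local inequalities only up to $\ip{p}{\cdot}$, so I would need to show that $p$ can be chosen with $\ip{p}{k}\le0$ for all $k\in\dom g\cap K$. This is exactly where the local hypothesis must be used, and I regard it as the crux of either route.
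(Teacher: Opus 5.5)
\textbf{There is a genuine gap.} Your reduction to a finite box and the case $t=s$ are correct, but the case you flag is left open, and neither of your routes closes it.

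The two auxiliary exchanges you propose are degenerate. For the pair $(a,0)$ with first step $-s$, the only step directed from $a-s=\tau e_j$ to $0$ is $-\tau e_j$, so the outputs are $0$ and $a$. Symmetrically, $(b,x)$ with first step $s$ returns $x$ and $b$. Both exchange inequalities therefore reduce to the tautology $g(0)+g(a)\ge g(a)+g(0)$, and nothing about even points can be extracted from them. The points $2s$ and $x-2s$ you hope to see are outputs of the pair $(0,x)$, and that exchange is precisely the one that returned $t\ne s$. The nondegenerate pairs would be $(0,b)$ and $(a,x)$. However, the axiom only guarantees some directed second step, so their outputs need not include $a$ or $b$, and the odd terms need not cancel. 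For example, with $x=(4,4)$ and $a=(1,1)$, the pair $(0,b)$ may return $(2,0)$ and $(1,3)$. As it stands, the proposal contains no argument for this case.

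Your fallback, as stated, also does not suffice. A vector $p$ from Proposition~\ref{prop:support} only gives $g(x)\ge g(0)+\ip{p}{x}$, so you need $\ip{p}{x}\ge0$, in effect $p=0$. The condition $\ip{p}{k}\le0$ for $k\in\dom g\cap K$ does not imply this when $\dom g\cap K$ contains few directions.

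You mention convolving with an even box, which is the right device, but you do not say which box or what it achieves. The missing idea is to make the directions $\pm2e_i$ available artificially. Convolve the box restriction $G$ with $\ind_K$ for $K=\{-2,0,2\}^n$ itself, and set $H=G\mathbin{\Box}\ind_K$.
\begin{itemize}
\item $H$ is jump $M$-convex with finite domain containing $0$.
\item Since $K=-K$, the local hypothesis gives $H(0)=\min_{k\in K}G(-k)=g(0)$.
\item The candidates $k=\pm2e_i$ give $H(\pm2e_i)\le G(0)=g(0)$.
\item Hence any support $a$ of $H$ at $0$ supplied by Proposition~\ref{prop:support} satisfies $\pm2a_i\le0$, so $a=0$.
\item Therefore $g(x)=G(x)\ge H(x)\ge H(0)=g(0)$.
\end{itemize}
The local hypothesis is used not to constrain $p$, but to guarantee that the convolution does not lower the value at the origin. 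The convolution itself supplies the $2n$ directions that pin the support vector to zero.
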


\begin{proof}
Fix such an $x$, and choose a finite integer coordinate box $B$
containing $[-2,2]^n\cap\Z^n$ and $x$. Let $G$ agree with $g$ on
$B$ and be $+\infty$ outside $B$. By Lemma~\ref{lem:reductions},
$G$ is jump $M$-convex with finite effective domain. Lemma~\ref{lem:even-box}
and Theorem~\ref{thm:convolution} imply that
\[
H(w)=(G\mathbin{\Box}\ind_K)(w)=\min_{k\in K}G(w-k)
\]
is also jump $M$-convex with finite effective domain.
Since $K=-K$, the assumed local inequalities and the candidate $k=0$
give $H(0)=g(0)$. The candidates $k=2e_i$ and $k=-2e_i$ give
\[
H(2e_i)\le g(0),\qquad H(-2e_i)\le g(0)
\qquad(1\le i\le n).
\]
In particular, these $2n+1$ points belong to $\dom H\cap2\Z^n$.
By Proposition~\ref{prop:support}, there exists $a\in\R^n$ with
\[
H(w)\ge H(0)+\ip{a}{w}
\qquad(w\in\dom H\cap2\Z^n).
\]
At $w=\pm2e_i$, this implies $2a_i\le0$ and $-2a_i\le0$.
Hence $a=0$. Finally, using the candidate $k=0$ at $w=x$ gives
\[
g(x)=G(x)\ge H(x)\ge H(0)=g(0).
\]
The point $x$ was arbitrary.
\end{proof}

For $z\in J_r=\dom f\cap L_r$, define
\begin{equation}\label{eq:parity-subdiff}
\partial_r f(z)=
\{p\in\R^n:f(x)\ge f(z)+\ip{p}{x-z}\text{ for every }x\in J_r\},
\end{equation}
and put $N_z=\dom f\cap(z+K)$.
Thus $N_z$ consists of the effective points in the parity class of $z$
whose coordinates differ from those of $z$ by at most two.

\begin{theorem}[Finite characterization of global supports]\label{thm:global-support}
Let $f$ be jump $M$-convex, let $r\in\{0,1\}^n$, and let $z\in J_r$.
Then
\begin{equation}\label{eq:local-subdiff}
\partial_r f(z)=
\{p\in\R^n:\ip{p}{u-z}\le f(u)-f(z)\text{ for every }u\in N_z\}
\ne\varnothing.
\end{equation}
In particular, $\partial_r f(z)$ is a nonempty polyhedron defined by at
most $3^n-1$ nontrivial inequalities, regardless of the size of $\dom f$.
\end{theorem}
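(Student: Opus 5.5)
The plan has two parts: the equality of the two sets in \eqref{eq:local-subdiff}, and nonemptiness. For the equality, the inclusion of $\partial_r f(z)$ in the local polyhedron is immediate, because $N_z\subseteq J_r$. Indeed, every $u\in z+K$ satisfies $u-z\in2\Z^n$, so $u\in L_r$. For the reverse inclusion, fix a vector $p$ satisfying the local inequalities and define
\[
g(w)=f(z+w)-\ip{p}{w}.
\]
By Lemma~\ref{lem:reductions} (translation and addition of a linear function), $g$ is jump $M$-convex with $0\in\dom g$. The local inequalities say exactly that $g(u)\ge g(0)$ for $u\in\dom g\cap K$. Lemma~\ref{lem:parity-local} then yields $g(x)\ge g(0)$ for all $x\in\dom g\cap2\Z^n$. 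Since $x\in\dom g\cap 2\Z^n$ corresponds to $z+x\in J_r$, this is precisely the global support inequality. The inequality count follows as well: $K$ has $3^n$ points, and the point $u=z$ gives the trivial inequality $0\le0$, so at most $3^n-1$ inequalities remain.

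For nonemptiness, I would reduce to the finite-domain Proposition~\ref{prop:support}. Take a finite integer coordinate box $B$ containing $z+[-2,2]^n$. Let $F(w)=f(z+w)$ for $z+w\in B$, and $F(w)=+\infty$ otherwise. By Lemma~\ref{lem:reductions}, $F$ is jump $M$-convex with finite effective domain containing $0$. Proposition~\ref{prop:support} gives $p$ with $F(v)\ge F(0)+\ip{p}{v}$ for all $v\in\dom F\cap2\Z^n$. Every $u\in N_z$ has $u-z\in K\cap(\dom F)\subseteq \dom F\cap 2\Z^n$, so $p$ satisfies the local inequalities. By the first part, $p\in\partial_r f(z)$.

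Finally, $\partial_r f(z)$ is a polyhedron because it is an intersection of finitely many closed half-spaces; when $N_z=\{z\}$ it is all of $\R^n$, which is still a polyhedron. I expect no serious obstacle, since the substantive work already sits in Lemma~\ref{lem:parity-local} and Proposition~\ref{prop:support}. The point requiring care is that the box $B$ in the nonemptiness step must contain all of $z+K$. Only then does the finite support produced by Proposition~\ref{prop:support} cover every inequality indexed by $N_z$, and only then does the local-to-global step upgrade it to a support on all of $J_r$.
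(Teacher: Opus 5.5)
Your proposal is correct and follows the paper's proof essentially step for step. The paper obtains nonemptiness from Proposition~\ref{prop:support} applied to the restriction of $x\mapsto f(z+x)$ to $[-2,2]^n\cap\Z^n$, upgrades a local support $p$ to a global one via $g(x)=f(z+x)-\ip{p}{x}$ and Lemma~\ref{lem:parity-local}, and gets the remaining inclusion by restricting the global inequalities to $N_z$, which is exactly what you do.
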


\begin{proof}
Apply Proposition~\ref{prop:support} to the restriction of
$x\mapsto f(z+x)$ to $[-2,2]^n\cap\Z^n$. This proves that the
right side of \eqref{eq:local-subdiff} is nonempty.
For any $p$ in that set, define
\[
g(x)=f(z+x)-\ip{p}{x}.
\]
This function is jump $M$-convex. The local inequalities give
$g(u)\ge g(0)$ for $u\in\dom g\cap K$.
By Lemma~\ref{lem:parity-local}, the inequality extends to
$\dom g\cap2\Z^n$, so $p\in\partial_r f(z)$.
The reverse inclusion follows by restricting the global
inequalities to $N_z$.
\end{proof}

The vector $p$ is selected in a fixed neighborhood of $z$ before any
distant point is considered. The proof therefore does not pass to a
limit of supports chosen in expanding boxes. The support may depend
on $z$, and it need not support points in other parity classes.

\begin{corollary}[Optimality certificates]\label{cor:optimality}
For $c\in\R^n$ and $z\in J_r$, the following are equivalent:
\begin{enumerate}
\item $z$ minimizes $f(x)-\ip{c}{x}$ over $J_r$;
\item $c\in\partial_r f(z)$;
\item $f(u)-\ip{c}{u}\ge f(z)-\ip{c}{z}$ for every $u\in N_z$.
\end{enumerate}
\end{corollary}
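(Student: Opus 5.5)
The three conditions are all unpacked using the definition \eqref{eq:parity-subdiff} together with Theorem~\ref{thm:global-support}. For (1)$\Leftrightarrow$(2), condition (1) says
\[
f(x)-\ip{c}{x}\ge f(z)-\ip{c}{z}\qquad(x\in J_r).
\]
Rearranged, this is $f(x)\ge f(z)+\ip{c}{x-z}$ for every $x\in J_r$, which is exactly the statement $c\in\partial_r f(z)$. So this equivalence is only a rewriting and uses nothing beyond the definition.

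For (2)$\Leftrightarrow$(3), I will rewrite condition (3) in the same way as $\ip{c}{u-z}\le f(u)-f(z)$ for all $u\in N_z$. This says precisely that $c$ belongs to the right-hand side of \eqref{eq:local-subdiff}. Theorem~\ref{thm:global-support} identifies that set with $\partial_r f(z)$, so (3) is equivalent to (2). The direction (2)$\Rightarrow$(3) is trivial in any case, since $N_z\subseteq J_r$: indeed $z+K\subseteq z+2\Z^n\subseteq L_r$ because $z\in L_r$. The direction (3)$\Rightarrow$(2) is the substantive part, and it is exactly the content of Theorem~\ref{thm:global-support}.

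Equivalently, (3)$\Rightarrow$(1) can be obtained directly from Lemma~\ref{lem:parity-local}. Apply it to $g(x)=f(z+x)-\ip{c}{x}$, which is jump $M$-convex by Lemma~\ref{lem:reductions}, and note that $\dom g\cap 2\Z^n=J_r-z$. I expect no real obstacle. The only point that needs checking is that the local neighbourhood $N_z$ lies inside the parity class $J_r$, so that the local and global inequality sets are being compared over matching index sets.
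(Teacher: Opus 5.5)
Your proof is correct and follows the paper's argument: (1)$\Leftrightarrow$(2) is the definition \eqref{eq:parity-subdiff}, and (2)$\Leftrightarrow$(3) is Theorem~\ref{thm:global-support}. Your side remarks are also right: $N_z\subseteq J_r$ holds because $K\subseteq 2\Z^n$, and the direct route through Lemma~\ref{lem:parity-local} is exactly how the paper proves the nontrivial inclusion in Theorem~\ref{thm:global-support}.
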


\begin{proof}
The first two statements are equivalent by definition, and the last
two by Theorem~\ref{thm:global-support}.
\end{proof}

Given exact domain-membership and value queries and the value $f(z)$,
the last condition requires at most $3^n-1$ neighbor-membership queries
and at most that many neighboring value queries. A support can be
found by solving the linear feasibility system \eqref{eq:local-subdiff}
in $n$ variables; rational local values admit a rational support.
For $n=3$, the system has at most $26$ nontrivial inequalities, including
diagonal directions. These are bounds on the number of local queries,
not on the cost of evaluating a particular representation of $f$.

The criterion also applies after imposing integer coordinate bounds,
since such restrictions preserve jump $M$-convexity. On a finite nonempty $J_r$, this gives a descent procedure for
$f(x)-\ip{c}{x}$. At each iteration, let $z$ be the current point and
move to a strictly improving point of $N_z$, if one exists. The procedure
terminates at a global minimizer after at most $|J_r|-1$ moves. This
cardinality bound does not imply polynomial running time in the input
encoding length.

\subsection{Sharp growth and stability}

The same finite system determines the exact linear growth away from
a supported point. For $p\in\partial_r f(z)$, write
\[
D_p(x)=f(x)-f(z)-\ip{p}{x-z}\qquad(x\in J_r).
\]
This quantity is nonnegative.

\Needspace{21\baselineskip}
\begin{proposition}[Exact growth modulus]\label{prop:sharpness}
Suppose $J_r\ne\{z\}$ and $p\in\partial_r f(z)$. Then
$N_z\setminus\{z\}\ne\varnothing$ and
\begin{equation}\label{eq:sharp-modulus}
\inf_{x\in J_r\setminus\{z\}}
\frac{D_p(x)}{\normone{x-z}}
=\min_{u\in N_z\setminus\{z\}}
\frac{D_p(u)}{\normone{u-z}}
=:\alpha_z(p)\ge0.
\end{equation}
Moreover, $\alpha_z(p)$ is the largest $\varepsilon\ge0$ for which
\begin{equation}\label{eq:support-stability}
p+[-\varepsilon,\varepsilon]^n\subseteq\partial_r f(z).
\end{equation}
The bound below also holds when $\alpha_z(p)=0$:
\begin{equation}\label{eq:sharp-growth}
f(x)\ge f(z)+\ip{p}{x-z}+\alpha_z(p)\normone{x-z}
\qquad(x\in J_r).
\end{equation}
In particular, $z$ is the unique minimizer of $f(x)-\ip{p}{x}$ on
$J_r$ if and only if it is strictly better than every other point in
$N_z$. In this case $\alpha_z(p)>0$.
\end{proposition}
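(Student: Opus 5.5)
The plan is to deduce everything from Theorem~\ref{thm:global-support}, applied to suitably perturbed supports, together with the identity
\[
\ip{q}{x-z}\le\varepsilon\normone{x-z}\qquad\text{for } q\in[-\varepsilon,\varepsilon]^n,
\]
with equality for $q$ given by $\varepsilon$ times the sign vector of $x-z$.

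\textbf{Nonemptiness of $N_z\setminus\{z\}$.} Suppose $J_r\ne\{z\}$ but $N_z=\{z\}$. Then the local system \eqref{eq:local-subdiff} is vacuous, so every $p\in\R^n$ lies in $\partial_r f(z)$. Pick $x\in J_r\setminus\{z\}$ and take $p=t(x-z)$ with $t$ large. Then $f(x)\ge f(z)+t\normtwo{x-z}^2$ fails as $t\to\infty$, a contradiction. The right-hand minimum in \eqref{eq:sharp-modulus} is therefore over a nonempty finite set, and it is $\ge0$ because $p\in\partial_r f(z)$.

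\textbf{Stability characterization.} For $\varepsilon\ge0$, the box $p+[-\varepsilon,\varepsilon]^n$ lies in $\partial_r f(z)$ if and only if $D_p(u)\ge\ip{q}{u-z}$ for all $u\in N_z$ and all $q\in[-\varepsilon,\varepsilon]^n$; this uses the local description in Theorem~\ref{thm:global-support}. Maximizing over $q$, this says $D_p(u)\ge\varepsilon\normone{u-z}$ for all $u\in N_z\setminus\{z\}$, that is, $\varepsilon\le\alpha_z(p)$. So $\alpha_z(p)$ is exactly the largest admissible $\varepsilon$, and the set of admissible $\varepsilon$ is the closed interval $[0,\alpha_z(p)]$.

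\textbf{Growth bound and the infimum identity.} With $\varepsilon=\alpha_z(p)$, every $q=p+\alpha_z(p)\sigma$ with $\sigma\in\{-1,1\}^n$ lies in $\partial_r f(z)$. For $x\in J_r$, choose $\sigma_i=\operatorname{sign}(x_i-z_i)$, with an arbitrary sign when $x_i=z_i$. The global support inequality for $q$ then gives \eqref{eq:sharp-growth}, including the case $\alpha_z(p)=0$. Dividing \eqref{eq:sharp-growth} by $\normone{x-z}$ shows that the infimum over $J_r\setminus\{z\}$ is at least $\alpha_z(p)$. It is at most $\alpha_z(p)$ because $N_z\setminus\{z\}\subseteq J_r\setminus\{z\}$, and the minimum there is attained. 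This proves \eqref{eq:sharp-modulus}.

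\textbf{Uniqueness statement.} If $z$ is strictly better than every other point of $N_z$ for $f-\ip{p}{\cdot}$, then $D_p(u)>0$ on the finite set $N_z\setminus\{z\}$, so $\alpha_z(p)>0$. Then \eqref{eq:sharp-growth} gives $D_p(x)>0$ for all $x\ne z$ in $J_r$, so $z$ is the unique minimizer. The converse is immediate.

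The main obstacle is the growth bound: one must pass from a box of supports to the $\ell_1$ term. The sign-vector choice handles this, but only because Theorem~\ref{thm:global-support} guarantees that every vertex of the box is a global support, not merely a local one.
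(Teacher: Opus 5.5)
Your proof is correct and follows essentially the same route as the paper. Both arguments get nonemptiness from $\partial_r f(z)=\R^n$, characterize the box condition by maximizing $\ip{h}{u-z}$ over the local system, and reach all of $J_r$ by the same maximization against the global support inequality (your sign-vector choice is the maximizer the paper uses implicitly), with uniqueness following from positivity of finitely many local ratios.
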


\begin{proof}
If $N_z=\{z\}$, Theorem~\ref{thm:global-support} would give
$\partial_r f(z)=\R^n$. A point $x\in J_r\setminus\{z\}$ would
then violate the support inequality for a sufficiently large positive
multiple of $x-z$. Thus the displayed local minimum exists.

For $\varepsilon\ge0$, condition \eqref{eq:support-stability} is,
by Theorem~\ref{thm:global-support}, equivalent to
\[
D_p(u)\ge
\max_{\|h\|_\infty\le\varepsilon}\ip{h}{u-z}
=\varepsilon\normone{u-z}
\qquad(u\in N_z).
\]
It therefore holds exactly when $\varepsilon\le\alpha_z(p)$.
Using the global definition \eqref{eq:parity-subdiff} instead shows
that the same condition is equivalent to
$D_p(x)\ge\varepsilon\normone{x-z}$ for every $x\in J_r$.
This proves \eqref{eq:sharp-modulus} and \eqref{eq:sharp-growth}.
Strict local inequalities make the minimum of the finitely many
positive ratios positive, and hence imply global uniqueness.
Global uniqueness plainly implies the strict local inequalities.
\end{proof}

If $\alpha_z(p)>0$ and $\|h\|_\infty<\alpha_z(p)$, then $z$ remains
the unique minimizer after replacing $p$ by $p+h$. Indeed, the new gap
is at least $(\alpha_z(p)-\|h\|_\infty)\normone{x-z}$.
Thus the finite local system gives an exact radius for preserving
optimality under linear perturbations, as well as a quantitative
certificate of uniqueness.

\section{Exact convexification and the parity condition}\label{sec:geometry}

\subsection{Domains and epigraphs}

\begin{corollary}\label{cor:domain}
For the domain $J$ of a jump $M$-convex function and every
$r\in\{0,1\}^n$,
\[
\conv(J_r)\cap L_r=J_r,\qquad J_r=J\cap L_r.
\]
\end{corollary}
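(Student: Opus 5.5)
The plan is to read the corollary off Proposition~\ref{prop:feasibility}, applied to $J$ itself. The inclusion $J_r\subseteq\conv(J_r)\cap L_r$ is immediate, because every point of $J_r$ lies in $L_r$ and is a trivial convex combination of itself. The content is therefore the reverse inclusion.

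For the reverse inclusion, take $z\in\conv(J_r)\cap L_r$. By the convention fixed in the introduction, convex hulls are ordinary convex hulls and consist of finite convex combinations. So there are an integer $m\ge1$, points $x^1,\ldots,x^m\in J\cap L_r$, and weights $\lambda_j\ge0$ with $\sum_j\lambda_j=1$ and $z=\sum_j\lambda_jx^j$. If $J_r=\varnothing$, then $\conv(J_r)=\varnothing$ and there is nothing to prove. This is also why I will not need Carathéodory's theorem or any closure argument, even when $J$ is infinite.

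Since $J$ is the effective domain of a jump $M$-convex function, it is nonempty, has constant parity, and satisfies the simultaneous set exchange axiom. These facts follow directly from Definition~\ref{def:jumpm} by discarding the value inequality \eqref{eq:JM}. Thus every hypothesis of Proposition~\ref{prop:feasibility} holds, including the assumption $z\in L_r$, and that proposition gives $z\in J$. Together with $z\in L_r$, this yields $z\in J_r$. Alternatively, one could quote Theorem~\ref{thm:main} for any jump $M$-convex $f$ with $\dom f=J$, for example $f=\ind_J$, and keep only the feasibility half of \eqref{eq:main}.

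I do not expect a genuine obstacle here. The only point needing care is checking that "domain of a jump $M$-convex function" supplies exactly the set-level hypotheses of Proposition~\ref{prop:feasibility}. The substantive difficulty was settled in Section~\ref{sec:feasibility} by the quadratic-kernel convolution argument.
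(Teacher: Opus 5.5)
Your proposal is correct and follows the paper's own proof. Write $z\in\conv(J_r)\cap L_r$ as a finite convex combination of points of $J_r$, apply Proposition~\ref{prop:feasibility} to the effective domain $J$ (which satisfies its set-level hypotheses by Definition~\ref{def:jumpm}) to get $z\in J_r$, and note that the reverse inclusion is immediate.
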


\begin{proof}
If $z$ belongs to the left side, it is a finite convex combination
of points in $J_r$ and lies in $L_r$.
Proposition~\ref{prop:feasibility} implies $z\in J_r$.
The reverse inclusion is immediate.
\end{proof}

\begin{corollary}\label{cor:epigraph}
For $E_r=\{(x,t)\in J_r\times\R:t\ge f(x)\}$, one has
\[
\conv(E_r)\cap(L_r\times\R)=E_r.
\]
\end{corollary}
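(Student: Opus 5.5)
The inclusion $E_r\subseteq\conv(E_r)\cap(L_r\times\R)$ is immediate, since every point of $E_r$ lies in $L_r\times\R$ and is a trivial convex combination of itself. For the reverse inclusion, take $(z,t)\in\conv(E_r)$ with $z\in L_r$. Write it as a finite convex combination
\[
(z,t)=\sum_{j=1}^m\lambda_j(x^j,t_j),\qquad (x^j,t_j)\in E_r,\quad \lambda_j\ge0,\quad \sum_j\lambda_j=1.
\]
After deleting zero-weight terms, we have $m\ge1$, each $x^j\in J_r=J\cap L_r$, and $t_j\ge f(x^j)$. The barycenter $z=\sum_j\lambda_jx^j$ lies in $L_r$ by assumption.

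The hypotheses of Theorem~\ref{thm:main} are therefore satisfied, with $r$, the points $x^j$ and the coefficients $\lambda_j$. The theorem gives $z\in J$, and hence $z\in J_r$ since $z\in L_r$. It also gives
\[
f(z)\le\sum_j\lambda_jf(x^j)\le\sum_j\lambda_jt_j=t,
\]
where the second inequality uses $\lambda_j\ge0$ and $t_j\ge f(x^j)$. Thus $(z,t)\in E_r$.

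There is no real obstacle. The only points to check are bookkeeping: the convex hull is the ordinary one, so only finite combinations arise; the second coordinate of the combination is exactly $t$; and feasibility of $z$ comes from Theorem~\ref{thm:main} itself, or equivalently from Corollary~\ref{cor:domain}, rather than being assumed. This is precisely the identity \eqref{eq:intro-epigraph} stated in the introduction.
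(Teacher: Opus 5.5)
Your proof is correct and is essentially the paper's argument. You write $(z,t)$ as a finite convex combination, apply Theorem~\ref{thm:main} to get $z\in J_r$ and $f(z)\le\sum_j\lambda_jf(x^j)\le\sum_j\lambda_jt_j=t$, and note that the reverse inclusion is immediate. Deleting zero-weight terms is harmless but unnecessary, since Theorem~\ref{thm:main} already allows zero coefficients.
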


\begin{proof}
Suppose $(z,t)=\sum_j\lambda_j(x^j,t_j)$ with
$(x^j,t_j)\in E_r$ and $z\in L_r$.
Theorem~\ref{thm:main} gives $z\in J_r$ and
\[
f(z)\le\sum_j\lambda_jf(x^j)
\le\sum_j\lambda_jt_j=t.
\]
Thus $(z,t)\in E_r$. The other inclusion is immediate.
\end{proof}

For finite nonempty $J_r$, its convexified epigraph is the polyhedron
\[
P_r=\conv\{(x,f(x)):x\in J_r\}
    +\{(0,s):s\ge0\}.
\]
Corollary~\ref{cor:epigraph} states that $P_r$ represents the function
exactly at points of $L_r$. If $f(x)$ is rational for every $x\in J_r$,
this is a rational polyhedron.
For an infinite $J_r$, the corollary still holds with the ordinary
convex hull; no polyhedrality or closedness is asserted.

\subsection{The convex-combination formula}

Another formulation is obtained by minimizing over all representations
of a prescribed barycenter. For $z\in L_r$, define
\begin{equation}\label{eq:convex-envelope}
\begin{split}
\mathcal C_r f(z)=\inf\Bigl\{
&\sum_{j=1}^m\lambda_jf(x^j):
m\ge1,\ x^j\in J_r,\ \lambda_j\ge0,\\
&\sum_j\lambda_j=1,\quad
\sum_j\lambda_jx^j=z
\Bigr\},
\end{split}
\end{equation}
where the infimum of the empty set is $+\infty$.

\begin{corollary}\label{cor:envelope}
For every $z\in L_r$, one has $\mathcal C_r f(z)=f(z)$.
\end{corollary}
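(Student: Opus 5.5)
The plan is to split according to whether $z\in J_r$, and in each case to compare the feasible set of \eqref{eq:convex-envelope} with the value of $f$ at $z$.

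If $z\in L_r\setminus J$, then $f(z)=+\infty$. The claim is that no admissible representation exists, so the infimum is over the empty set and also equals $+\infty$. Indeed, suppose $z=\sum_j\lambda_jx^j$ with $x^j\in J_r$ and convex weights. Then Proposition~\ref{prop:feasibility}, or equivalently Corollary~\ref{cor:domain}, forces $z\in J$, which is a contradiction. Thus both sides equal $+\infty$.

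Now suppose $z\in J_r$. The trivial representation with $m=1$, $x^1=z$ and $\lambda_1=1$ is admissible, which gives $\mathcal C_r f(z)\le f(z)$. For the reverse inequality, every admissible representation satisfies the hypotheses of Theorem~\ref{thm:main}. That theorem gives $f(z)\le\sum_j\lambda_jf(x^j)$, and taking the infimum yields $\mathcal C_r f(z)\ge f(z)$. Alternatively, I could take a single vector $p\in\partial_r f(z)$ from Theorem~\ref{thm:global-support} and sum the support inequalities, exactly as in the proof of Proposition~\ref{prop:anchored}.

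I do not expect a genuine obstacle. The only point needing care is that the infimum is attained, namely at the trivial representation. This is why no closure or limiting argument is required, even when $J_r$ is infinite. The other convention to check is that an empty infimum is $+\infty$, which agrees with the extension of $f$ outside its domain.
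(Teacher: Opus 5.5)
Your proposal is correct and follows the paper's proof essentially verbatim. Both split on whether $z\in J_r$: when $z\in J_r$, Theorem~\ref{thm:main} gives the lower bound and the singleton representation attains it; when $z\notin J_r$, Corollary~\ref{cor:domain} (equivalently Proposition~\ref{prop:feasibility}) leaves no admissible representation, so both sides equal $+\infty$.
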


\begin{proof}
If $z\in J_r$, Theorem~\ref{thm:main} bounds each candidate in
\eqref{eq:convex-envelope} below by $f(z)$, and the singleton
combination $z$ attains that value. If $z\notin J_r$,
Corollary~\ref{cor:domain} rules out every candidate, so both
sides are $+\infty$.
\end{proof}

These conclusions isolate the convex-geometric assertion from the
algorithmic operations needed to exploit it. In the matching setting
of \cite[Section~2.2]{LL2025}, additional structure supplies the relevant optimization
and separation procedures. Theorem~\ref{thm:main} removes the SBO
assumption from the Jensen property itself; it does not supply an
efficient separation oracle for an arbitrary representation of a
jump $M$-convex function.

\subsection{Attained duality and separating inequalities}

Fix a nonempty $J_r$. Define the conjugate by
\[
f_r^*(p)=\sup_{x\in J_r}\{\ip{p}{x}-f(x)\}
\qquad(p\in\R^n).
\]
This takes values in $\R\cup\{+\infty\}$, since $J_r$ is nonempty.

\begin{corollary}[Attained dual representation at domain points]\label{cor:dual}
For every $z\in J_r$,
\begin{equation}\label{eq:attained-dual}
f(z)=\max_{p\in\R^n}\{\ip{p}{z}-f_r^*(p)\}.
\end{equation}
The set of maximizers is exactly $\partial_r f(z)$.
For each such maximizer,
$f_r^*(p)=\ip{p}{z}-f(z)$, and the supremum defining $f_r^*(p)$
is attained at $x=z$.
\end{corollary}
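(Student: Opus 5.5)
The plan is to derive the attained duality formula from the global support theorem by the Fenchel--Young argument. Fix $z\in J_r$.

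\emph{Weak duality.} For every $p\in\R^n$, taking $x=z$ in the supremum defining $f_r^*$ gives $f_r^*(p)\ge\ip{p}{z}-f(z)$. Hence $\ip{p}{z}-f_r^*(p)\le f(z)$. When $f_r^*(p)=+\infty$ the left side is $-\infty$, and the inequality is trivially true. So the supremum of the right side of \eqref{eq:attained-dual} over $p$ is at most $f(z)$.

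\emph{Attainment.} Theorem~\ref{thm:global-support} gives $\partial_r f(z)\ne\varnothing$. Take any $p\in\partial_r f(z)$. By \eqref{eq:parity-subdiff}, $\ip{p}{x}-f(x)\le\ip{p}{z}-f(z)$ for every $x\in J_r$, with equality at $x=z$. Therefore $f_r^*(p)=\ip{p}{z}-f(z)$, this value is finite, and the supremum is attained at $x=z$. Then $\ip{p}{z}-f_r^*(p)=f(z)$, so the supremum in \eqref{eq:attained-dual} is a maximum equal to $f(z)$, and every element of $\partial_r f(z)$ is a maximizer.

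\emph{Maximizers lie in the subdifferential.} Conversely, suppose $p$ satisfies $\ip{p}{z}-f_r^*(p)=f(z)$. Then $f_r^*(p)$ is finite and equals $\ip{p}{z}-f(z)$. By the definition of the conjugate, $\ip{p}{x}-f(x)\le\ip{p}{z}-f(z)$ for all $x\in J_r$, which is exactly the statement $p\in\partial_r f(z)$. The last assertion about attainment at $x=z$ was already shown above for every such $p$.

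The only substantive input is the nonemptiness of $\partial_r f(z)$ for possibly infinite domains, and Theorem~\ref{thm:global-support} supplies it. Without it one would only get the supremum, not a maximum. Everything else is routine bookkeeping with extended values; the one point to handle with care is that $f_r^*(p)$ may be $+\infty$ for some $p$, and such $p$ are never maximizers.
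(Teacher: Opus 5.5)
Your proposal is correct and follows the paper's proof: the candidate $x=z$ gives $\ip{p}{z}-f_r^*(p)\le f(z)$ for every $p$, with equality exactly when $p\in\partial_r f(z)$. Attainment then reduces to the nonemptiness supplied by Theorem~\ref{thm:global-support}. Your explicit handling of $f_r^*(p)=+\infty$ is a harmless extra detail that the paper leaves implicit.
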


\begin{proof}
The candidate $x=z$ in the conjugate gives
$\ip{p}{z}-f_r^*(p)\le f(z)$ for every $p$.
Equality holds precisely when
$\ip{p}{x}-f(x)\le\ip{p}{z}-f(z)$ for every $x\in J_r$,
which is the condition $p\in\partial_r f(z)$.
This set is nonempty by Theorem~\ref{thm:global-support}.
\end{proof}

In particular, for any known feasible $z\in J_r$ and any $t<f(z)$,
a support $p$ computed from \eqref{eq:local-subdiff} gives the valid cut
\[
s\ge f(z)+\ip{p}{x-z}
\]
for $\conv(E_r)$ that strictly separates $(z,t)$.
This separates a point below the epigraph whose lattice coordinate is
known to be feasible; it does not by itself decide domain membership or separate
arbitrary real query points.

For example, if $f$ is a jump $M$-convex matching value function as
in \cite[Section~2.2]{LL2025}, such a cut is a valid linear lower bound
on the cost of every feasible degree vector in the prescribed parity
class. Its construction uses local value and feasibility queries.
The same certificate applies to any other value function for which
the exchange axiom and these queries are available.

\medskip
\noindent\begin{minipage}{\linewidth}
\setlength{\parindent}{22pt}
\subsection{Total parity is not enough}

\begin{example}\label{ex:parity}
Let
\[
J=\{(0,0),(1,1),(1,-1),(-1,1),(-1,-1)\}\subseteq\Z^2,
\]
and define $f(0,0)=1$ and $f(x)=0$ at the other four points.
Extend $f$ by $+\infty$ outside $J$.
Then $f$ is jump $M$-convex, but
\[
f(0,0)>
\tfrac12 f(1,1)+\tfrac12 f(-1,-1).
\]
\end{example}
\end{minipage}

\begin{proof}
All points of $J$ have even total parity.
Distinct inputs have $\ell_1$ distance either $2$ or $4$.
In the distance-$2$ case, after any directed first step $s$ the
remaining difference $t=y-x-s$ is a directed unit step.
The outputs are $y$ and $x$, so \eqref{eq:JM} holds with equality.
In the distance-$4$ case, the inputs are opposite corner points.
For any directed first step $s$, choose $t=s$.
The outputs are the two remaining corners, and all four function
values in \eqref{eq:JM} are zero.
Equal inputs have no exchange obligation.
This verifies the full exchange axiom.

Finally, $(0,0)=\tfrac12(1,1)+\tfrac12(-1,-1)$, whereas
$f(0,0)=1$ and both source values are zero.
The inputs and barycenter have the same total parity and lie in
$J$, but their coordinatewise parity classes differ.
\end{proof}

\section{Integrable distributions and equality}\label{sec:probability}

Global supports extend Theorem~\ref{thm:main} to distributions with
countably infinite support. We first prove the required fact about means,
using standard separation and relative-interior results
\cite[Sections~6 and~11]{Rockafellar1970}.

\begin{lemma}\label{lem:mean-conv}
Let $X$ be a random vector taking values in a countable subset of
$\R^n$ almost surely, with $\mathbb E\normone{X}<\infty$. If
$S=\{x:\mathbb P(X=x)>0\}$, then $\mathbb EX\in\conv(S)$.
\end{lemma}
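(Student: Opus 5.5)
We argue by induction on the dimension $d$ of the affine hull of $S$, or more directly by separation and relative interiors. Put $\mu=\mathbb EX$ and $C=\conv(S)$. The set $S$ is countable and nonempty, since $X\in S$ almost surely: the countable set of values carrying zero mass is a null set. Thus $\mathbb P(X\in S)=1$ and $\mu=\sum_{x\in S}\mathbb P(X=x)\,x$, a convergent series of nonnegative weights summing to one.

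First, $\mu\in\overline{C}$. If not, strict separation \cite[Section~11]{Rockafellar1970} gives $a$ and $\beta$ with $\ip{a}{x}\le\beta<\ip{a}{\mu}$ for all $x\in S$. Taking expectations gives $\ip{a}{\mu}\le\beta$, which is a contradiction. More usefully, we show $\mu\in C$ by induction on $\dim\operatorname{aff}(S)$. If the dimension is $0$, then $S$ is a single point equal to $\mu$.

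For the inductive step, recall that $\operatorname{ri}\overline C=\operatorname{ri}C\subseteq C$ \cite[Section~6]{Rockafellar1970}. If $\mu\in\operatorname{ri}\overline C$, we are done. Otherwise $\mu$ lies in the relative boundary of $\overline C$. Proper separation of $\{\mu\}$ from $\operatorname{ri}\overline C$ within $\operatorname{aff}(S)$ then yields $a$, not constant on $\operatorname{aff}(S)$, with $\ip{a}{x}\le\ip{a}{\mu}$ for all $x\in\overline C$, and in particular for all $x\in S$. Since $\mathbb E\ip{a}{X}=\ip{a}{\mu}$ and the integrand $\ip{a}{\mu}-\ip{a}{X}$ is nonnegative almost surely, it vanishes almost surely. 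Hence every $x\in S$ has positive mass and satisfies $\ip{a}{x}=\ip{a}{\mu}$.

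It follows that $S$ lies in the hyperplane $\{x:\ip{a}{x}=\ip{a}{\mu}\}$. But $a$ is nonconstant on $\operatorname{aff}(S)$, so this contradicts $S$ spanning $\operatorname{aff}(S)$. Therefore the boundary case cannot occur, and $\mu\in\operatorname{ri}C\subseteq C$. With this argument the induction is not even needed; it would only be required if one preferred to restrict to the face. The main care point is applying proper separation from the relative interior inside $\operatorname{aff}(S)$, so that the separating functional is genuinely nonconstant there. Integrability, $\mathbb E\normone{X}<\infty$, guarantees that all expectations of linear functionals are finite and that linearity of expectation applies.
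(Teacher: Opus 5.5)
Your proof is correct and follows essentially the same route as the paper. Strict separation puts $\mathbb EX$ in $\overline{\conv(S)}$, and the identity $\mathrm{ri}\,\overline C=\mathrm{ri}\,C\subseteq C$ reduces matters to the relative boundary. There, a proper supporting hyperplane taken inside the affine hull, together with the fact that every point of $S$ carries positive mass, gives the contradiction. Your ``nonnegative integrand with zero mean vanishes a.s.'' is just the contrapositive of the paper's ``some inequality is strict, so the expectation is positive.'' As you note yourself, the induction on $\dim\mathrm{aff}(S)$ is not needed.
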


\begin{proof}
Then $\mathbb P(X\in S)=1$. Write $z=\mathbb EX$ and
$C=\conv(S)$. Separation from the closed
convex set $\overline C$ shows that $z\in\overline C$:
otherwise a separating linear functional would contradict its
expectation at $z$. If $z\notin C$, then $z$ lies on the relative
boundary of $\overline C$, because the relative interiors of $C$
and $\overline C$ coincide and are contained in $C$.
A proper supporting hyperplane to $\overline C$ at $z$, taken in
its affine hull, gives a vector $a$ with $\ip{a}{x-z}\ge0$ for
every $x\in S$. At least one inequality is strict: otherwise $S$,
and hence the affine hull of $C$, would lie in that proper hyperplane.
Every point of $S$ has positive
probability, so $\mathbb E\ip{a}{X-z}>0$, contrary to
$\mathbb EX=z$. The functional is integrable by the first-moment
assumption. Thus $z\in C$.
\end{proof}

\begin{theorem}[Jensen inequality for integrable distributions]\label{thm:integrable}
Let $f$ be jump $M$-convex, let $r\in\{0,1\}^n$, and let $X$ take
values in $J_r$ almost surely. Suppose
\[
\mathbb E\normone{X}<\infty,\qquad z=\mathbb EX\in L_r.
\]
Then $z\in J_r$, the negative part of $f(X)$ is integrable, and
\begin{equation}\label{eq:integrable-jensen}
f(z)\le\mathbb E f(X),
\end{equation}
where the right side may equal $+\infty$.
\end{theorem}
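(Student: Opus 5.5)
The argument has three steps. First, feasibility comes from Lemma~\ref{lem:mean-conv} combined with Corollary~\ref{cor:domain}. Second, a global support at $z$ from Theorem~\ref{thm:global-support} gives both the integrability of the negative part and the inequality. Third, we take expectations in the support inequality.

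Let $S=\{x:\mathbb P(X=x)>0\}$. Since $X$ takes values in the countable set $J_r\subseteq\Z^n$ almost surely, every value carrying mass lies in $S$, $\mathbb P(X\in S)=1$, and we may take $S\subseteq J_r$ after discarding a null set. The hypothesis $\mathbb E\normone{X}<\infty$ allows us to apply Lemma~\ref{lem:mean-conv}, which gives $z\in\conv(S)\subseteq\conv(J_r)$. Since also $z\in L_r$, Corollary~\ref{cor:domain} yields $z\in J_r$. Concretely, $z$ is a finite convex combination of points of $S$, so Proposition~\ref{prop:feasibility} applies directly.

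With $z\in J_r$, Theorem~\ref{thm:global-support} gives some $p\in\partial_r f(z)$. Thus, almost surely,
\[
f(X)\ge f(z)+\ip{p}{X-z}.
\]
The right-hand side is an integrable random variable, because $|\ip{p}{X-z}|\le\|p\|_\infty\normone{X-z}$ and $\mathbb E\normone{X}<\infty$. It follows that the negative part satisfies $f(X)^-\le\bigl(f(z)+\ip{p}{X-z}\bigr)^-$, which is integrable. Hence $\mathbb E f(X)$ is well defined in $\R\cup\{+\infty\}$. Taking expectations and using linearity together with $\mathbb E(X-z)=0$ gives $\mathbb E f(X)\ge f(z)$.

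The only delicate point is the feasibility step, namely passing from the mean to a finite convex combination of atoms. Lemma~\ref{lem:mean-conv} supplies exactly this, so no limiting argument over truncations is needed. Everything else is bookkeeping about extended-valued expectations: the support bound ensures $\mathbb E f(X)$ is never of the form $\infty-\infty$. If $\mathbb E f(X)^+=\infty$ the inequality is trivial.
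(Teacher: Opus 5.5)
Your proposal is correct and follows the paper's proof essentially step for step. You get feasibility from Lemma~\ref{lem:mean-conv} together with Proposition~\ref{prop:feasibility} (equivalently Corollary~\ref{cor:domain}), and then use a global support $p\in\partial_r f(z)$ from Theorem~\ref{thm:global-support} to make the negative part of $f(X)$ integrable and to take expectations.
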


\begin{proof}
The random vector takes values in the countable set $J_r$ almost surely.
Lemma~\ref{lem:mean-conv} expresses $z$ as a finite convex combination
of points in $J_r$. Proposition~\ref{prop:feasibility} therefore
gives $z\in J_r$.
Take $p\in\partial_r f(z)$, which exists by
Theorem~\ref{thm:global-support}. The global inequality
\[
f(X)\ge f(z)+\ip{p}{X-z}
\]
bounds $f(X)$ below by an integrable random variable. The negative part of $f(X)$ is therefore integrable. Taking expectations gives
\eqref{eq:integrable-jensen}.
\end{proof}

\Needspace{18\baselineskip}
\begin{corollary}[Equality and a quantitative Jensen bound]\label{cor:jensen-gap}
Under the hypotheses of Theorem~\ref{thm:integrable}, fix
$p\in\partial_r f(z)$ and define the contact set
\[
C_{z,p}=\{x\in J_r:f(x)=f(z)+\ip{p}{x-z}\}.
\]
Then
\begin{equation}\label{eq:gap-identity}
\mathbb E f(X)-f(z)=\mathbb E D_p(X),
\end{equation}
as an identity in $[0,+\infty]$. Equality in Jensen holds if and only
if $\mathbb P(X\in C_{z,p})=1$.
If $J_r\ne\{z\}$, then
\begin{equation}\label{eq:quantitative-jensen}
\mathbb E f(X)-f(z)
\ge\alpha_z(p)\,\mathbb E\normone{X-z},
\end{equation}
where $\alpha_z(p)$ is determined by the finite formula
\eqref{eq:sharp-modulus}. If $\alpha_z(p)>0$, equality in Jensen
is possible only when $X=z$ almost surely.
\end{corollary}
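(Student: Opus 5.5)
The plan is to reduce everything to the pointwise identity $f(X)-f(z)=D_p(X)+\ip{p}{X-z}$ on $J_r$ and then take expectations. Theorem~\ref{thm:integrable} gives $z\in J_r$, so $f(z)$ is finite and $p\in\partial_r f(z)$ exists by Theorem~\ref{thm:global-support}. On the almost-sure event $X\in J_r$, the quantity $D_p(X)$ is a nonnegative random variable, possibly with infinite expectation. Since $\mathbb E\normone{X}<\infty$, the variable $\ip{p}{X-z}$ is integrable with mean $\ip{p}{\mathbb EX-z}=0$. Adding an integrable variable to a nonnegative one lets us split the expectation in $[0,+\infty]$. This gives $\mathbb E f(X)-f(z)=\mathbb E D_p(X)+0$, which is \eqref{eq:gap-identity}; the value $+\infty$ occurs on both sides simultaneously. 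The only point needing care is that $\mathbb E f(X)$ is well defined, and the negative-part integrability from Theorem~\ref{thm:integrable}, or directly the decomposition above, supplies this.

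For the equality characterization, equality in Jensen means $\mathbb E D_p(X)=0$. Because $D_p\ge0$ on $J_r$, this holds if and only if $D_p(X)=0$ almost surely, that is, $\mathbb P(X\in C_{z,p})=1$. The contact set is exactly the zero set of $D_p$ on $J_r$.

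For \eqref{eq:quantitative-jensen}, assume $J_r\ne\{z\}$. Then Proposition~\ref{prop:sharpness} applies, and \eqref{eq:sharp-growth} reads $D_p(x)\ge\alpha_z(p)\normone{x-z}$ for all $x\in J_r$. Substituting $x=X$, taking expectations, and using \eqref{eq:gap-identity} gives the bound, with $\alpha_z(p)$ computed by the finite formula \eqref{eq:sharp-modulus}. If $\alpha_z(p)>0$ and equality holds, then $0=\mathbb E D_p(X)\ge\alpha_z(p)\,\mathbb E\normone{X-z}$. This forces $\normone{X-z}=0$ almost surely, so $X=z$ almost surely. Equivalently, the contact set reduces to $\{z\}$ by the uniqueness part of Proposition~\ref{prop:sharpness}.

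If $J_r=\{z\}$, the quantitative statement is not asserted, and the first two claims are trivial since $X=z$ almost surely. No step is a real obstacle. The only delicate point is the bookkeeping of extended-valued expectations in the first step: we avoid the expression $\infty-\infty$ because the linear term is integrable, and nonnegativity of $D_p$ makes $\mathbb E D_p(X)$ meaningful.
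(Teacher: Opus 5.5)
Your proposal is correct and follows the paper's proof. Both use the decomposition $f(X)=f(z)+\ip{p}{X-z}+D_p(X)$ into an integrable mean-zero affine term and a nonnegative term, the zero-expectation criterion for the equality case, and the pointwise bound $D_p(x)\ge\alpha_z(p)\normone{x-z}$ from Proposition~\ref{prop:sharpness}, integrated to give \eqref{eq:quantitative-jensen}; you additionally spell out why $\alpha_z(p)>0$ forces $X=z$ almost surely, which the paper leaves implicit.
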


\begin{proof}
The decomposition
$f(X)=f(z)+\ip{p}{X-z}+D_p(X)$ consists of an integrable affine
term and a nonnegative term. The affine term has expectation $f(z)$,
which proves \eqref{eq:gap-identity}. A nonnegative random variable
has expectation zero exactly when it vanishes almost surely.
If $J_r\ne\{z\}$, \eqref{eq:sharp-modulus} gives
$D_p(x)\ge\alpha_z(p)\normone{x-z}$ for every $x\in J_r$,
including when $\alpha_z(p)=0$.
Taking expectations proves \eqref{eq:quantitative-jensen}.
\end{proof}

Thus, among all integrable distributions in a fixed parity class with
prescribed feasible mean $z$, the minimum expected value is $f(z)$,
attained by the point mass at $z$. The contact set describes every
distribution attaining that value. When $\alpha_z(p)>0$, any finite
Jensen gap $\eta$ gives the stability estimate
\[
\mathbb E\normone{X-z}\le\eta/\alpha_z(p).
\]
These statements concern distributions within one coordinatewise
parity class; Example~\ref{ex:parity} precludes the corresponding
conclusion under total parity alone.

\section*{Use of AI tools}

The author initiated and directed the research and determined its
objectives and scope. OpenAI's ChatGPT and Codex were used under the
author's direction to assist with proof development, checking arguments
and references, and manuscript preparation. The author takes
responsibility for the final content.

\bibliographystyle{amsplain}
\bibliography{references}
\end{document}